\newtheorem{thm}{Theorem}[section]
\newtheorem{thmx}{Theorem}
\newtheorem{prop}{Proposition}[section]
\newtheorem{rem}{Remark}[section]
\newtheorem{lem}{Lemma}[section]
\newtheorem{ex}{Example}[section]
\newtheorem{defn}{Definition}[section]
\newtheorem{cor}{Corollary}[section]
\begin{document}

\pagestyle{fancy}
\fancyhead{} 
\fancyhead[EC]{Helleland}   
\fancyhead[EL,OR]{\thepage}   
\fancyhead[OC]{Wick-rotations of \\ pseudo-Riemannian Lie groups}   
\fancyfoot{} 

\title{\large{\textbf{Wick-rotations of \\ pseudo-Riemannian Lie groups}}}
\author{\textbf{Christer Helleland}}
\date{\today}

\maketitle

\begin{center} \vspace{0.3cm}      
Faculty of Science and Technology,\\     
 University of Stavanger,\\  N-4036 Stavanger, Norway         
\vspace{0.3cm} \end{center}

\begin{center} \texttt{u17ch0@gmail.com}
\end{center}

\begin{abstract} We study Wick-rotations of left-invariant metrics on Lie groups, using results from real GIT (\cite{W2}, \cite{W3}). An invariant for Wick-rotation of Lie groups is given, and we describe when a pseudo-Riemannian Lie group (a Lie group with a left-invariant metric) can be Wick-rotated to a Riemannian Lie group. We define a Cartan involution of a general Lie algebra, and prove a general version of $\acute{E}$. Cartan's result, namely the existence and conjugacy of Cartan involutions.  \end{abstract}

\section{Introduction}
This paper is motivated first of all by the study of Wick-rotations of pseudo-Riemannian manifolds defined in \cite{W2}. Given a pseudo-Riemannian manifold $(M,g)$ of signature $(p,q)$, then it is interesting know whether it can be Wick-rotated to another space $(\tilde{M},\tilde{g})$ (w.r.t a fixed point $p\in M\cap \tilde{M}$) of signature $\tilde{p}+\tilde{q}=p+q$. In (\cite{W1}, \cite{W2}, \cite{W3}) the isometry action of the pseudo-orthogonal group $O(p,q)$ acting on tensors restricted to $p$ is explored. For instance it is proved that if $\tilde{p}=0$ (i.e $\tilde{g}$ is Riemannian) then there is a Cartan involution of the metric $\theta\in O(p,q)$ (at $p$) which fixes the Riemann tensor $R$ under the isometry action, i.e $\theta\cdot R=R$. Thus $(M,g)$ is \emph{Riemann purely electric} $(RPE)$ at $p$. More generally it is proved that for a space to be \emph{purely electric} (respectively \emph{purely magnetic}) or (RPE) (respectively \emph{Riemann purely magnetic}) is preserved under a Wick-rotation at a common fixed point $p$. 

A particular subclass of Wick-rotations which is of interest in its own right and deserves to be explored, is the class of \emph{Lie groups} $G$ equipped with left-invariant metrics, so called \emph{pseudo-Riemannian Lie groups}. If we look at a semi-simple complex Lie group $G^{\mathbb{C}}$ equipped with the left-invariant Killing form: $-\kappa$, then there are natural examples of Wick-rotations to find at the identity point, simply because there exist real forms. Moreover by the theory of semi-simple Lie groups, one may always Wick-rotate a real form $(G, -\kappa)\subset (G^{\mathbb{C}}, -\kappa)$ to a Riemannian Lie group, simply because of the existence of a Cartan involution of the Lie algebra $\mathfrak{g}$. Thus motivated by this example, then for a general pseudo-Riemannian Lie group $(G,g)$, an interesting question one may ask: 

\textit{Given a pseudo-Riemannian Lie group $(G,g)$, when can it be Wick-rotated to a Riemannian Lie group $(\tilde{G},\tilde{g})$?} 

Suppose $(G,g)$ is Wick-rotated to a Riemannian Lie group $(\tilde{G}, \tilde{g})$, then in view of the results given in (\cite{W1}, \cite{W2}, \cite{W3}), then the so called Wick-rotatable tensors restricted to $\mathfrak{g}$ must be fixed by the isometry action (induced from the metric) of some (linear) Cartan involution $\theta\in O(p,q)$ of the metric. This could for instance be the Riemann tensor $R$ (as mentioned above), and is related to the fact that $R$ can be embedded into the same complex orbit as $\tilde{R}$ (the Riemann tensor of $(\tilde{G}, \tilde{g})$ restricted to $\tilde{\mathfrak{g}}$), i.e $$O(p+q,\mathbb{C})\cdot R\ni \tilde{R}.$$

However some tensors for a left-invariant metric (for instance the Levi-Civita connection, the Riemann tensor and so on..) are very interlinked with the Lie bracket of the Lie algebra $\mathfrak{g}$. Moreover in the semi-simple case (equipped with the left-invariant Killing form) such tensors are naturally fixed by the Cartan involutions of the Lie algebra: $\theta\in Aut(\mathfrak{g})$. For example the Levi-Civita connection is given by: $\nabla_xy=\frac{1}{2}[x,y]$, thus naturally $\theta\cdot \nabla_xy=\nabla_xy$. 

The author of this paper therefore pondered about the existence of a Cartan involution: $\theta\in Aut(\mathfrak{g})$, for a general left-invariant metric (on a general Lie group $G$) which can be Wick-rotated to a Riemannian Lie group $\tilde{G}$. 

We prove an invariant for Wick-rotations of Lie groups, and give a complete answer to the question above, where we show that the answer is precisely related to the existence of a Cartan involution of the Lie algebra. Our main result of this paper is Theorem \ref{t}:

\begin{thmx}Suppose $(G,g)$ is a pseudo-Riemannian Lie group that can be Wick-rotated to another Lie group $(\tilde{G},\tilde{g})$. Then there exist a Cartan involution of $\mathfrak{g}$ if and only if there exist a Cartan involution of $\tilde{\mathfrak{g}}$. \end{thmx}

We begin this paper by defining every notion we shall use throughout, and recall the definitions of Wick-rotations in \cite{W2}. Some new definitions are also given, in particular we define a \emph{Wick-rotation of a Lie group}, and a \emph{Cartan involution} of a general Lie algebra. We also state the results we use from \cite{W3}, which makes the proofs easier to follow.
 
\begin{rem}\label{signature} In this paper a Riemannian space shall always denote the signature: \\
$(+,+,\cdots,+)$, and a Lorentzian space shall denote the signature: $(+, +,\cdots,+, -)$ and so on. The anti-isometry map $g\mapsto -g$ induces an isomorphism $O(p,q)\cong O(q,p)$. If we change signature via this anti-isometry map, then our results in this paper will be related precisely via this map as well. Moreover using a right-invariant metric instead of a left-invariant metric does not change the results of this paper.    \end{rem}

\textit{Conventions}: Throughout this paper $\kappa$ shall denote the Killing form of a Lie algebra. A product of vector spaces $V\times V$ shall often be denoted by just $V^2$. A complex Lie group shall always be denoted by the symbol: $G^{\mathbb{C}}$.

\section{Preliminaries}
\subsection{Real forms and left-invariant metrics}

In this paper a real Lie group $G$ shall be said to be an \textsl{immersive real form} of a complex Lie group $G^{\mathbb{C}}$, if there is a real immersion $G\rightarrow G^{\mathbb{C}}$ (of Lie groups) where $G^{\mathbb{C}}$ is viewed as a real Lie group, such that $\mathfrak{g}$ is embedded as a real form of $\mathfrak{g}^{\mathbb{C}}$ (the Lie algebra of $G^{\mathbb{C}}$). If the immersion is also injective then we shall call $G$ a \textsl{virtual real form}. A virtual real form $G$ which is also an embedding (i.e the image of $G$ is closed in $G^{\mathbb{C}}$), we shall say that the real form is an \textsl{embedded real form}. An embedded real form which also satisfies: $G^{\mathbb{C}}=G\cdot G^{\mathbb{C}}_0$ (abstract group product) shall be said to be a \textsl{real form}.

Note that a connected embedded real form is also a real form. All these specialised "complexifications" divide the Lie groups into different classes. For instance if $G$ is a connected semi-simple Lie group, then it is a fact that $G$ is a virtual real form if and only if $G$ is linear. 

One shall note that given any 1-connected real Lie group $G$, then we can complexify the Lie algebra via an inclusion $i$: $\mathfrak{g}\hookrightarrow \mathfrak{g}^{\mathbb{C}}$. We can find a complex 1-connected Lie group: $G^{\mathbb{C}}$ with Lie algebra $\mathfrak{g}^{\mathbb{C}}$. One can find a smooth map (of real Lie groups): $G\rightarrow G^{\mathbb{C}}$ with differential $i$, thus $G$ is an immersive real form of $G^{\mathbb{C}}$. 

We shall abuse notation and write $G\subset G^{\mathbb{C}}$ for an immersive real form.

\begin{ex} Consider the complex orthogonal group: $O(4,\mathbb{C})$, then the map: $g\mapsto I_{3,1}\overline{g}I_{3,1},$ is a conjugation map (i.e the differential is a conjugation map), where $(I_{3,1})_{ii}=+1$ for $1\leq i\leq 2, \ (I_{3,1})_{33}=-1$ and zero otherwise. The fix points of this map is just $O(1,3)$, which is an example of a real form of $O(4,\mathbb{C})$. Consider the universal covering group $G:=\widetilde{SL_2(\mathbb{R})}$ of $SL_2(\mathbb{R})$, then it is a fact that $G$ is not a virtual real form of any complex Lie group. However $G$ is an immersive real form of $SL_2(\mathbb{C})$. \end{ex}

Let $G$ be a real Lie group, then a left-invariant metric $g$ on $G$ is a pseudo-Riemannian metric satisfying: $$g_{gh}(L_{gh^*}(x_h), L_{gh^*}(y_h))=g_h(x_h, y_h), \forall g, h\in G, \ \forall x_h,y_h\in T_hG, $$ where $L_{g^*}$ is the push-forward of the translation map: $G\xrightarrow{L_g} G$: $h\mapsto gh$. Instead of writing $g_e(-,-)$ for the metric at the identity point, we simply write just $g(-,-)$. A bi-invariant metric $g$ on a real Lie group $G$ is a left-invariant metric which is also right-invariant i.e $L_g$ above is replaced with $R_g: h\mapsto hg$. 

On a real vector space $V$ a symmetric non-degenerate bilinear form $g$ shall be referred to as a \textit{pseudo-inner product}, and an inner product in the case of positive definite. A pair $(V,g)$ shall be referred to as a pseudo-inner product space (respectively inner product space). If we have a Lie algebra $\mathfrak{g}$ with a pseudo-inner product $g$ which satisfies: $$g([x,y],z)=g(x,[y,z]), \ \ x,y,z\in \mathfrak{g},$$ then $g$ shall be called \textsl{invariant}. Such a pair: $(\mathfrak{g}, g)$ is called a \textsl{quadratic Lie algebra}. For example the pair: $\Big{(} \mathfrak{sl}_2(\mathbb{R}), -\kappa \Big{)}$ is a quadratic Lie algebra, however the 3-dimensional Heisenberg Lie algebra: $\mathfrak{h}_3(\mathbb{R})$, is never a quadratic Lie algebra. We recall that an ideal $\mathfrak{I}\lhd \mathfrak{g}$ is called \textit{non-degenerate} if $\mathfrak{g}=\mathfrak{I}\oplus \mathfrak{I}^{\perp}$ w.r.t the invariant form $g$. In the case that $\mathfrak{g}$ is a reductive Lie algebra, then all ideals are in fact non-degenerate.

A \textit{holomorphic inner product} $g^{\mathbb{C}}$ on a complex vector space $V^{\mathbb{C}}$ shall be a symmetric non-degenerate complex bilinear form. The definitions of left-invariance and so on above are analogous in the case of a complex Lie group equipped with a holomorphic metric.

\begin{defn} A real Lie group $G$ equipped with a left-invariant metric $g$, denoted $(G,g)$ shall be called a \textit{pseudo-Riemannian Lie group}. If $g$ is also a Riemannian metric then the pair $(G,g)$ shall be called a \textit{Riemannian Lie group}. A complex Lie group $G^{\mathbb{C}}$ equipped with a left-invariant holomorphic metric, shall be called a \textit{holomorphic Riemannian Lie group} (or a \textit{complex Riemannian Lie group}). \end{defn}

\begin{defn} Let $(G,g_1)$ and $(H,g_2)$ be two pseudo-Riemannian Lie groups. Then $G$ is said to be isometric to $H$ if there exist a Lie group isomorphism: $G\xrightarrow{F} H$, such that $F_{*}:\mathfrak{g}\rightarrow \mathfrak{h}$ is an isomorphism of pseudo-inner product spaces: $(\mathfrak{g}, g_1)\cong (\mathfrak{h},g_2)$. The spaces are said to be locally isometric if there exist a local homomorphism $G\supset U\xrightarrow{F} V\subset H$ such that $F_{*}$ is an isomorphism of pseudo-inner product spaces: $(\mathfrak{g}, g_1)\cong (\mathfrak{h},g_2)$.   \end{defn}

The left-invariant metrics on a real Lie group $G$ are in bijections with the pseudo-inner products on the Lie algebra $\mathfrak{g}$. So we shall always work with a pseudo-inner product $g$ on the Lie algebra and induce a left-invariant metric on the Lie group by: $$g_h(x_h,y_h):=g\Big{(}L_{h^{-1}_{*}}(x_h), L_{h^{-1}_{*}}(y_h)\Big{)}, \ \ x_h, y_h\in T_hG.$$

We note that for a compact Lie group $G$, we can always complexify it to a complex Lie group: $G^{\mathbb{C}}$, such that $G\subset G^{\mathbb{C}}$ is a real form, by using the universal complexification group. In particular starting from a compact Lie group with a left-invariant metric we naturally have a candidate for a holomorphic Riemannian Lie group such that $G\subset G^{\mathbb{C}}$ is a real form. Recall that the universal complexification group of a real Lie group $G$, is a pair: $(G^{\mathbb{C}}, \eta)$, where $\eta$ is a real Lie homomorphism: $G\rightarrow G^{\mathbb{C}}$, satisfying the universal property (see for instance \cite{Neeb}). For example the pseudo-orthogonal groups: $O(p,q)$ has universal complexification group $O(p+q,\mathbb{C})$. 

\subsection{Wick-rotations of pseudo-Riemannian manifolds}

We recall some of the definitions of Wick-rotations given in \cite{W2}, and define a Wick-rotation of a pseudo-Riemannian Lie group.

\begin{defn}\label{realslice}
Given a holomorphic inner product space $(E,g^{\mathbb{C}})$. Then if $V\subset E$ is a real linear subspace for which $g:=g^{\mathbb{C}}\big{|}_V$ is non-degenerate and real valued, i.e., $g(X,Y)\in \mathbb{R},~ \forall X,Y\in V$, we will call $V$ a \textsl{real slice}. 
\end{defn}

\begin{rem} In this paper we always assume $V\subset (E,g^{\mathbb{C}})$ has the same real dimension as the complex dimension of $E$. Thus $V$ is also a real form of $E$, i.e there is a conjugation map $E\xrightarrow{\sigma} E$ with fix points $V$. We shall simply refer to $V\subset (E,g^{\mathbb{C}})$ as a real form in such a case, to mean both a real slice and a real form. \end{rem}

Thus in the definition $(V, g:=g^{\mathbb{C}}\big{|}_V)$ is a pseudo-inner product space, and if $(p,q)$ denotes the signature of $g$, then the isometry group $O(p,q)$ of $(V, g)$ is a real Lie group and is a real form of $O(p+q,\mathbb{C})$ (the isometries of $(E,g^{\mathbb{C}})$). Indeed if $\sigma$ is the conjugation map of $V$ in $E$ then note the involution $F$ of real Lie groups: $$g\mapsto \sigma g\sigma, \ g\in O(p+q,\mathbb{C}).$$ The differential of this map is a conjugation map, and $O(p,q)$ is the fix points of $F$, i.e is a real form. Such a map $F$ is often called a \textsl{real structure}.

\begin{defn}
Given a complex (holomorphic) manifold $M^{\mathbb{C}}$ with complex (holomorphic) Riemannian metric $g^{\mathbb{C}}$. If a submanifold $M\subset M^{\mathbb{C}}$ for any point $p\in M$ we have that $T_pM$ is a real slice of $(T_pM^{\mathbb{C}},g^{\mathbb{C}})$  (in the sense of  Defn. \ref{realslice}), we will call $M$ a real slice of $(M^{\mathbb{C}},g^{\mathbb{C}})$. 
\end{defn}

This definition implies that the induced metric from $M^{\mathbb{C}}$ is real valued on $M$. $M$ is therefore a pseudo-Riemannian manifold. 

\begin{defn}[Wick-related spaces]
Two pseudo-Riemannian manifolds $M$ and $\tilde{M}$ are said to be \textsl{Wick-related} if there exists a holomorphic Riemannian manifold $(M^{\mathbb{C}},g^{\mathbb{C}})$ such that $M$ and $\tilde{M}$ are embedded as real slices of $M^{\mathbb{C}}$. 
\end{defn}

\begin{defn}[Wick-rotation]
	If two Wick-related spaces (of the same real dimension) intersect at a point $p$ in $M^{\mathbb{C}}$, then we will use the term \textsl{Wick-rotation}: the manifold $M$ can be Wick-rotated to the manifold $\tilde{M}$ (with respect to the point $p$).  
\end{defn}

We now define a Wick-rotation of a pseudo-Riemannian Lie group:

\begin{defn} [Wick-rotation of a pseudo-Riemannian Lie group]
Let $G\subset G^{\mathbb{C}}\supset \tilde{G}$ be two immersive real forms which are Wick-related in $(G^{\mathbb{C}}, g^{\mathbb{C}})$ for $g^{\mathbb{C}}$ a left-invariant holomorphic metric. Then we shall say that the pseudo-Riemannian Lie group $(G,g)$ is Wick-rotated to $(\tilde{G},\tilde{g})$.  \end{defn}

Thus from the definition: $(G,g)\subset (G^{\mathbb{C}},g^{\mathbb{C}})$ is a real slice of Lie groups, and shall write $(p,q)$ for the signature of $g$. If there is another real slice $(\tilde{G}, \tilde{g})\subset (G^{\mathbb{C}},g^{\mathbb{C}})$ of Lie groups, then we shall refer to the signature of $\tilde{g}$ as $(\tilde{p},\tilde{q})$. We shall often just say a Wick-rotations of Lie groups. Note that two Lie groups which are Wick-related are also Wick-rotated at the identity point $p:=1$.

The definition implies that two Wick-rotatable metrics on real Lie groups are left-invariant themselves, and also note that a Wick-rotation of Lie groups induces in the obvious way a Wick-rotation of the identity components. Moreover the property of bi-invariance for connected groups is an invariant:

\begin{prop} Suppose $(G,g)$ is Wick-rotatable to $(\tilde{G}, \tilde{g})$ and they are both connected. Then $g(-,-)$ is bi-invariant if and only if $\tilde{g}(-,-)$ is bi-invariant.  \end{prop}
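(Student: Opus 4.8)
The plan is to pass to the Lie algebra level, reduce bi-invariance of each real metric to a single $\mathrm{ad}$-invariance identity, and observe that this identity is carried by the \emph{same} complex-bilinear form for both real slices. First I would recall the classical criterion: since $G$ is connected, the left-invariant metric $g$ is bi-invariant if and only if it is $\mathrm{Ad}(G)$-invariant, and, as $G$ is connected and generated by $\exp(\mathfrak{g})$, this is equivalent to $\mathrm{ad}$-invariance of the associated bilinear form $B$ obtained by evaluating $g$ at the identity $p:=1$, namely
\begin{equation*}
B([z,x],y) + B(x,[z,y]) = 0 \qquad \text{for all } x,y,z \in \mathfrak{g}.
\end{equation*}
The identical criterion applies to $\tilde{g}$ on the connected group $\tilde{G}$, yielding a form $\tilde{B}$ on $\tilde{\mathfrak{g}}$ and the same skew-symmetry condition there.

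Next I would set up the complexified picture. Let $B^{\mathbb{C}}$ denote the symmetric complex-bilinear form on $\mathfrak{g}^{\mathbb{C}}$ obtained by evaluating the left-invariant holomorphic metric $g^{\mathbb{C}}$ at the identity. Because $(G,g)$ and $(\tilde{G},\tilde{g})$ are real slices of $(G^{\mathbb{C}},g^{\mathbb{C}})$, the forms $B$ and $\tilde{B}$ are precisely the restrictions of $B^{\mathbb{C}}$ to the real forms $\mathfrak{g}$ and $\tilde{\mathfrak{g}}$, and the real brackets are the restrictions of the complex bracket of $\mathfrak{g}^{\mathbb{C}}$. The crucial structural fact, which I would use repeatedly, is that each of $\mathfrak{g}$ and $\tilde{\mathfrak{g}}$ is a real form of $\mathfrak{g}^{\mathbb{C}}$ and hence contains a $\mathbb{C}$-basis of $\mathfrak{g}^{\mathbb{C}}$. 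I deliberately stay at the algebra level for the complex form, so that I need not assume $G^{\mathbb{C}}$ itself is connected.

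The heart of the argument is then a short complexification lemma. Consider
\begin{equation*}
F(x,y,z) := B^{\mathbb{C}}([z,x],y) + B^{\mathbb{C}}(x,[z,y]),
\end{equation*}
which is complex-trilinear on $\mathfrak{g}^{\mathbb{C}}$, being assembled from the complex bracket and the complex-bilinear form $B^{\mathbb{C}}$. A complex-multilinear map vanishes identically as soon as it vanishes on a spanning set; since $\mathfrak{g}$ spans $\mathfrak{g}^{\mathbb{C}}$ over $\mathbb{C}$, the map $F$ vanishes on all of $\mathfrak{g}^{\mathbb{C}}$ if and only if it vanishes on $\mathfrak{g}\times\mathfrak{g}\times\mathfrak{g}$, and likewise if and only if it vanishes on $\tilde{\mathfrak{g}}\times\tilde{\mathfrak{g}}\times\tilde{\mathfrak{g}}$. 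Thus $B$ is $\mathrm{ad}$-invariant on $\mathfrak{g}$ exactly when $B^{\mathbb{C}}$ is $\mathrm{ad}$-invariant on $\mathfrak{g}^{\mathbb{C}}$, exactly when $\tilde{B}$ is $\mathrm{ad}$-invariant on $\tilde{\mathfrak{g}}$. Combined with the first step, this gives that $g$ is bi-invariant if and only if $\tilde{g}$ is bi-invariant.

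I expect the only real obstacle to be bookkeeping rather than substance: verifying carefully that the notion of real slice delivers exactly the restricted bracket and the restricted form $B^{\mathbb{C}}|_{\mathfrak{g}}$ (and $B^{\mathbb{C}}|_{\tilde{\mathfrak{g}}}$), and confirming that the equivalence between bi-invariance and $\mathrm{ad}$-invariance is genuinely available for each connected group. Both points are routine given the definitions already in place, so the entire weight of the proof rests on the one-line multilinearity observation above.
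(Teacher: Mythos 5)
Your proof is correct and follows essentially the same route as the paper: both reduce bi-invariance on each connected group to $\mathrm{ad}$-invariance of the form at the identity (the paper cites Milnor's Lemmas 7.1 and 7.2, extended to signature $(p,q)$), and then transfer that condition through the common complexification. Your explicit trilinear-vanishing argument on a spanning set is just a spelled-out version of the paper's one-line observation that $ad(\mathfrak{g})\subset\mathfrak{o}(p,q)\subset\mathfrak{o}(n,\mathbb{C})$ and $ad(\mathfrak{g})^{\mathbb{C}}=ad(\mathfrak{g}^{\mathbb{C}})$, with the minor virtue of never needing $G^{\mathbb{C}}$ to be connected.
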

\begin{proof} The proofs given in (\cite{Milnor}, Lemma 7.1 and 7.2) also hold for pseudo-Riemannian left-invariant metrics, with $\mathfrak{o}(n)$ replaced with $\mathfrak{o}(p,q)$. Moreover if the metric $g(-,-)$ is bi-invariant, then because $ad(\mathfrak{g})\subset\mathfrak{o}(p,q)\subset \mathfrak{o}(n,\mathbb{C})$, and $ad(\mathfrak{g})^{\mathbb{C}}=ad(\mathfrak{g}^{\mathbb{C}})$ it follows that the holomorphic metric must also be bi-invariant, thus also $\tilde{g}(-,-)$. The converse is identical.  \end{proof}

Note that the property of being connected or simply connected are not necessarily preserved under a Wick-rotation. However under a Wick-rotation of real forms, then being connected is conserved.

\begin{ex}\label{wick} Let $SL_2(\mathbb{R})\subset SL_2(\mathbb{C})\supset SU(2)$ be the natural inclusions. Then they are real forms, and Wick-rotated w.r.t to the holomorphic Killing form $\kappa$ on $\mathfrak{sl}_2(\mathbb{C})$. Note that $(SL_2(\mathbb{R}), \kappa)$ is Lorentzian and $(SU(2), \kappa)$ has signature: $(-,-,-)$. \end{ex}

We also define:

\begin{defn} Let $V\subset (E,g^{\mathbb{C}})$ be a real slice. We say an involution $V\xrightarrow{\theta} V\in O(p,q)$, is a \textsl{Cartan involution} of $g:=g^{\mathbb{C}}\big{|}_{V}$, if $g_{\theta}(\cdot,\cdot):=g^{\mathbb{C}}\big{|}_{V}(\cdot,\theta(\cdot))$, is an inner product on $V$. If $\theta=1$ then $V$ is said to be a \textsl{compact real slice}, or in the case that $V$ is also a real form, then $V$ shall be said to be a \textsl{compact real form}. \end{defn}

Note the resemblance (in the definition) with a compact real form of a complex semi-simple Lie algebra and its Killing form. In the case of Lie algebras: $(\mathfrak{g}, g)\subset (\mathfrak{g}^{\mathbb{C}}, g^{\mathbb{C}})$, then a Cartan involution $\theta$ of $g$ is not necessarily a homomorphism of Lie algebras, since we do not know it they exist. We do not even know if there exist a compact real form which is also a Lie subalgebra of $(\mathfrak{g}^{\mathbb{C}},  g^{\mathbb{C}})$.  But we know if $\mathfrak{g}$ is semi-simple, and $g^{\mathbb{C}}=-\kappa$, then there exist a Cartan involution $\theta$ which is also homomorphism of the Lie algebra. 

But more generally we shall define:

\begin{defn} \label{Cinv} Let $\mathfrak{g}\subset (\mathfrak{g}^{\mathbb{C}}, g^{\mathbb{C}})$ be a real form. A Cartan involution $\theta$ of $\mathfrak{g}$ is a Cartan involution of $g:=g^{\mathbb{C}}_{|_{\mathfrak{g}}}(-,-)$ which is also a homomorphism of Lie algebras.  \end{defn}

Thus a Cartan involution of $g$ is only a linear Cartan involution of the pseudo-inner product $g$, but a Cartan involution of $\mathfrak{g}$ is a Cartan involution of $g$ which is also a homomorphism of Lie algebras. Currently at this point we only know that Cartan involutions of $\mathfrak{g}$ exist when $\mathfrak{g}$ is abelian or $\mathfrak{g}$ is semi-simple equipped with the Killing form: $-\kappa$. One shall note that there are examples where they do not exist, indeed by changing the sign to: $\kappa$, then it is straight forward to show that there are no Cartan involutions of $\mathfrak{g}$.

\begin{defn} Two real forms $V$ and $\widetilde{V}$ of $E$ are said to be compatible if their conjugation maps commute, i.e $[\sigma,\tilde{\sigma}]=0$.  \end{defn}

Often we shall refer to a pair $(V, \tilde{V})$ as a compatible pair- to mean that the spaces are compatible.

We recall from \cite{W2}, that if $(E,g^{\mathbb{C}})$ is a holomorphic inner product space, and $V, \tilde{V}$ and $W$ are real forms such that $W$ is a compact real form (i.e of Euclidean signature), then if they are pairwise compatible, the triple: $\Big{(}V,\tilde{V}, W \Big{)}$, is said to be a \textsl{compatible triple}. Note that Example \ref{wick} is an example of a compatible triple: $$\Big{(}V:=\mathfrak{sl}_2(\mathbb{R}), \tilde{V}:=\mathfrak{su}(2), W:=\mathfrak{su}(2)\Big{)}.$$ 
\\

We shall call the eigenspace decomposition of a Cartan involution: $\theta$, for the Cartan decomposition. 

\begin{rem} \label{Ocon}By the uniqueness of a signature associated to a pseudo-inner product $g$ then all Cartan involutions of $g$ are conjugate in $O(p,q)$. In fact given two Cartan involutions: $\theta_j$ ($j=1,2$) then $g\mapsto \theta_jg\theta_j$ is a global Cartan involution of $O(p,q)$. Thus if $g\theta_1g^{-1}=\theta_2$ for some $g\in O(p,q)$, then writing $g=k_2e^x$, where $k_2$ commutes with $\theta_2$ and $x\in \mathfrak{o}(p,q)$, we obtain $\theta_1=e^x\theta_2e^{-x}$, and therefore $\theta_1, \theta_2$ are conjugate by an element $g\in O(p,q)_0$.\end{rem}

Suppose now we have a Wick-rotation of two real Lie groups: $(G,g)\subset (G^{\mathbb{C}}, g^{\mathbb{C}})\supset (\tilde{G},\tilde{g})$. Let $\theta\in O(p,q)$ be a Cartan involution of the metric $g$, and let $W$ denote the corresponding unique compact real form associated with $\theta$, i.e $W:=V_+\oplus iV_-,$ where $\mathfrak{g}=V_+\oplus V_-$ is the Cartan decomposition. Then by \cite{W3} it is possible to find a real form $\tilde{V}\subset \mathfrak{g}^{\mathbb{C}}$ (as vector spaces) and a linear isomorphism: $\tilde{V}\xrightarrow{\phi}\tilde{\mathfrak{g}}$ such that $\phi^{\mathbb{C}}\in O(n,\mathbb{C})$, and $(\mathfrak{g}, \tilde{V}, W)$ is a compatible triple. So consider the triple: $\Big{(} \mathfrak{o}(p,q), \mathfrak{o}(\tilde{p},\tilde{q}), \mathfrak{o}(n)\Big{)}$, of Lie algebras of the isometry groups associated with the compatible triple $(\mathfrak{g}, \tilde{V}, W)$. 

Then the following straightforward result is important to note:

\begin{lem} [\cite{W2}, Lemma 3.6] \label{trippel} The triple of real forms: $\Big{(} \mathfrak{o}(p,q), \mathfrak{o}(\tilde{p},\tilde{q}), \mathfrak{o}(n)\Big{)}$, embedded into $\mathfrak{o}(n,\mathbb{C})$ is a compatible triple of Lie algebras. \end{lem}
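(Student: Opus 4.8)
The plan is to realise each of the three real forms as the fixed-point set of an explicit anti-holomorphic involution of $\mathfrak{o}(n,\mathbb{C})$ induced by the corresponding conjugation on $E=\mathfrak{g}^{\mathbb{C}}$, and then to transport the commuting relations among those conjugations up to the isometry algebras. Write $\sigma_{\mathfrak{g}}, \sigma_V, \sigma_W$ for the conjugation maps of the real slices $\mathfrak{g}, V, W\subset E$, each an anti-holomorphic involution satisfying $g^{\mathbb{C}}(\sigma_\ast x,\sigma_\ast y)=\overline{g^{\mathbb{C}}(x,y)}$ and fixing exactly the slice in question. As already observed in the excerpt for $O(p,q)$, each induces a real structure on $\mathfrak{o}(n,\mathbb{C})$ by $\tau_\ast(A):=\sigma_\ast\circ A\circ\sigma_\ast$, and the corresponding fixed-point sets are precisely $\mathfrak{o}(p,q)$, $\mathfrak{o}(\tilde p,\tilde q)$ and $\mathfrak{o}(n)$, the second of these because the signature of $V$ agrees with that of $\tilde{\mathfrak g}$ via the isometry $\phi$.

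First I would endow $\mathfrak{o}(n,\mathbb{C})$ with its natural holomorphic inner product, the trace form $B(A,A'):=\operatorname{tr}(AA')$ (equivalently a multiple of the Killing form), so that the phrase ``compatible triple of Lie algebras'' refers to compatible real slices of $(\mathfrak{o}(n,\mathbb{C}),B)$. I would then check that each $\tau_\ast$ is genuinely a conjugation of this holomorphic inner product space: since $\sigma_\ast$ is anti-linear and involutive one has $\operatorname{tr}(\sigma_\ast C\sigma_\ast)=\overline{\operatorname{tr}(C)}$ for complex-linear $C$, whence
\[
B(\tau_\ast A,\tau_\ast A')=\operatorname{tr}\!\big(\sigma_\ast A\sigma_\ast\,\sigma_\ast A'\sigma_\ast\big)=\operatorname{tr}\!\big(\sigma_\ast (AA')\sigma_\ast\big)=\overline{B(A,A')}.
\]
Moreover $\mathfrak{o}(n)$ is a compact real slice of $(\mathfrak{o}(n,\mathbb{C}),B)$, since the trace form is definite on the isometry algebra of a definite slice; this supplies the required compact member of the triple.

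The crux is then to lift compatibility. Because $(\mathfrak{g},V,W)$ is a compatible triple, the three conjugations $\sigma_{\mathfrak{g}},\sigma_V,\sigma_W$ pairwise commute on $E$, this being the content of pairwise compatibility of real slices in \cite{2}. A direct computation transports this to the induced involutions: for instance
\[
\tau_{\mathfrak g}\tau_V(A)=\sigma_{\mathfrak g}\sigma_V\,A\,\sigma_V\sigma_{\mathfrak g}=\sigma_V\sigma_{\mathfrak g}\,A\,\sigma_{\mathfrak g}\sigma_V=\tau_V\tau_{\mathfrak g}(A),
\]
and likewise for the remaining two pairs, so that $\tau_{\mathfrak g},\tau_V,\tau_W$ pairwise commute on $\mathfrak{o}(n,\mathbb{C})$. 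Hence the three real forms are pairwise compatible real slices of $(\mathfrak{o}(n,\mathbb{C}),B)$, and together with the compactness of $\mathfrak{o}(n)$ this is exactly the assertion that $\big(\mathfrak{o}(p,q),\mathfrak{o}(\tilde p,\tilde q),\mathfrak{o}(n)\big)$ is a compatible triple of Lie algebras.

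The step I expect to require the most care is matching this working description of compatibility to the precise definition in \cite{2}: I am using that two real slices are compatible exactly when their conjugations commute, and that this condition is preserved under the passage $\ast\mapsto\mathfrak{o}(\ast)$. Once that dictionary is fixed, identifying the correct holomorphic inner product on $\mathfrak{o}(n,\mathbb{C})$ and verifying that each $\tau_\ast$ respects it are the only non-formal points; the commuting computation itself is routine. One should also confirm that conjugation by $\phi^{\mathbb{C}}\in O(n,\mathbb{C})$ carries the real structure $\tau_V$ to the one cutting out $\mathfrak{o}(\tilde p,\tilde q)$, so that no generality is lost in working with the slice $V$ throughout.
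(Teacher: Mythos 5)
Your proof is correct and is essentially the argument behind the cited result (\cite{2}, Lemma 3.6), which the present paper quotes without reproving: the conjugations of the compatible triple $(\mathfrak{g},V,W)$ pairwise commute, hence so do the induced real structures $A\mapsto \sigma_\ast\circ A\circ \sigma_\ast$ on $\mathfrak{o}(n,\mathbb{C})$, whose fixed-point sets are exactly $\mathfrak{o}(p,q)$, $\mathfrak{o}(\tilde{p},\tilde{q})$ and the compact form $\mathfrak{o}(n)$ --- the same observation the paper itself makes when exhibiting $O(p,q)$ as a real form of $O(n,\mathbb{C})$. Your auxiliary verification that the trace form $B$ makes these real slices of a holomorphic inner product space is harmless but not needed, since compatibility of triples of Lie algebras in \cite{2}, \cite{3} is phrased directly in terms of pairwise commuting conjugation maps together with one member being a compact real form.
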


Thus we note that up to an isometry $g\in O(n,\mathbb{C})$ we may assume our two Lie algebras $\mathfrak{g}$ and $\tilde{\mathfrak{g}}$ (viewed as a vector space) form a compatible triple with a compact real form $W\subset (\mathfrak{g}^{\mathbb{C}}, g^{\mathbb{C}})$. 

\subsection{Real GIT on compatible representations} \label{compatiblereps}

In this section we recall some definitions and results of \cite{W3} that we shall use. We consider certain type of groups here. When considering a real form: $G\subset G^{\mathbb{C}}$, then $G^{\mathbb{C}}$ shall be of type linearly complex reductive, and $G$ should either be linearly real reductive, or in the case where $G^{\mathbb{C}}\subset GL(V^{\mathbb{C}})$ is defined over $\mathbb{R}$, the real points: $G:=GL(V)\cap G^{\mathbb{C}}$. This is the assumptions in the paper \cite{W3}. Thus we may for instance use the pseudo-orthogonal group $O(p,q)\subset O(n,\mathbb{C})$ defined as the isometry group of some pseudo-inner product space: $(V,g)\subset (V^{\mathbb{C}},g^{\mathbb{C}})$. A compact real form of $G^{\mathbb{C}}$ shall always be denoted by $U$.

Let $G\subset GL(V)$ be such a group. A Cartan involution $\theta$ of $\mathfrak{g}$ is now a Cartan involution in the sense of a reductive Lie algebra. Recall that this means that $\theta$ is the restriction of a Cartan involution of $\mathfrak{gl}(V)$. In view of Definition \ref{Cinv},  $\theta$ is a Cartan involution of $(\mathfrak{g}, g)$, with $g=\lambda \kappa\oplus B$ $(\lambda<0)$, where $\kappa$ is the Killing form on $[\mathfrak{g}, \mathfrak{g}]$ and $B$ a pseudo-inner product on $\mathfrak{z}(\mathfrak{g})$. We refer to for example \cite{W3} or \cite{RS} where such Cartan involutions are considered in more detail. A global Cartan involution $\Theta$ with $d\Theta=\theta$ of $G$ always exist for such groups. For example the class of linear semisimple Lie groups of finitely many connected components (fcc) are one such class.

\begin{defn} Let $G\subset G^{\mathbb{C}}\supset \tilde{G}$ be two real Lie subgroups of a complex Lie group such that the real Lie algebras are real forms of $\mathfrak{g}^{\mathbb{C}}$. Then we say $G$ and $\tilde{G}$ are \textsl{compatible} if the Lie algebras are compatible. \end{defn}

\begin{defn} Let $G\subset G^{\mathbb{C}}\supset \tilde{G}$ and $U\subset G^{\mathbb{C}}$ be real Lie subgroups of a complex Lie group such that the real Lie algebras are real forms of $\mathfrak{g}^{\mathbb{C}}$. Moreover assume $U$ is compact. Then we say $\Big{(}G, \tilde{G}, U\Big{)}$ is a \textsl{compatible triple} if the Lie algebras are pairwise compatible. \end{defn}

If we use Lemma \ref{trippel}, in the context of Wick-rotations (see the previous section), then the triple of isometry groups: $\Big{(} O(p,q), O(\tilde{p},\tilde{q}), O(n)\Big{)}$ form a compatible triple when the pseudo-inner product spaces they are isometries of, form a compatible triple.  

\begin{defn} [\cite{RS}] Let $G\xrightarrow{\rho^G_V} GL(V)$ be a real representation, then $\rho^G_V$ is said to be a \emph{balanced representation} if there exist an involution $V\xrightarrow{\theta} V$, and a global Cartan involution: $G\xrightarrow{\Theta} G$ such that: $$\Big{(}\forall g\in G \Big{)}\Big{(}\rho^G_V(\Theta(g))=\theta\circ\rho^G_V(g)\circ\theta\Big{)}.$$ \end{defn}

Thus if we have an involution $\theta$ of $V$ balancing our action, then w.r.t the global Cartan involution $\Theta$ of $G$ with Cartan decomposition: $G=Ke^{\mathfrak{p}}$, there exist a pseudo-inner product $g(-,-)$ on $V$ such that $\theta$ is a Cartan involution of $g(-,-)$, and the inner product $g_{\theta}(-,-):=g(-,\theta(-))$ is $K$-invariant. Let $\mathcal{M}(G,V)$ denote the minimal vectors of our action, i.e those $v\in V$ satisfying: $||g\cdot v||\geq ||v||$ for all $g\in G$, where $||v||^2:=g_{\theta}(v,v)$. Then if $V=V_+\oplus V_-$ is the Cartan decomposition, we naturally have $V_+\cup V_-\subset \mathcal{M}(G,V)$. The Cartan involutions of $g(-,-)$ which are conjugate by the action of $G$ to $\theta$ are defined as the \textsl{inner Cartan involutions} of $g(-,-)$.

A complex action: $\rho^{\mathbb{C}}$ of $G^{\mathbb{C}}$ acting on $V^{\mathbb{C}}$ is said to be a complexified action of a real action $\rho^G_V$ if $\rho^{\mathbb{C}}(g)(v)=\rho(g)(v)$ for all $g\in G$ and $v\in V$.

\begin{defn} \label{triple} Let $G\subset G^{\mathbb{C}}\supset \tilde{G}$ be real forms, and $G\xrightarrow{\rho^G_V} GL(V)$ and $\tilde{G}\xrightarrow{\rho^{\tilde{G}}_{\tilde{V}}} GL(\tilde{V})$ be real representations of Lie groups. Suppose $G^{\mathbb{C}}\xrightarrow{\rho^{\mathbb{C}}}GL(V^{\mathbb{C}})$ is a complexified action of both $\rho^G_V$ and $\rho^{\tilde{G}}_{\tilde{V}}$. Then we say that $\rho^G_V$ is \textsl{compatible} with $\rho^{\tilde{G}}_{\tilde{V}}$, if the following two criterions are fulfilled:
\begin{enumerate}

\item{} $G$ and $\tilde{G}$ are compatible real forms of $G^{\mathbb{C}}$.
\item{} $V$ and $\tilde{V}$ are compatible real forms of $V^{\mathbb{C}}$.
\end{enumerate}
\end{defn}

\begin{defn} \label{comp} Let $\rho^G_V, \rho^{\tilde{G}}_{\tilde{V}}$ and $\rho^U_W$ be pairwise compatible representations, where $U\subset G^{\mathbb{C}}$, is a compact real form. Then the triple: $\Big{(}\rho^G_V, \rho^{\tilde{G}}_{\tilde{V}}, \rho^U_W\Big{)}$ is said to be a \textsl{compatible triple}. \end{defn}

If we have such a compatible triple, then all the real actions in the triple are balanced, and we can choose pseudo-inner products $g(-,-)$ and $\tilde{g}(-,-)$ on $V$ and $\tilde{V}$ respectively, in such a way that they restrict from the same Hermitian form on $V^{\mathbb{C}}$. Moreover if $\tau$ denotes the conjugation map of $W$ in $V^{\mathbb{C}}$ then it restricts to Cartan involutions: $\theta$ (of $g$) and $\tilde{\theta}$ (of $\tilde{g}$). The Cartan involutions also balance the real actions respectively. In particular the inner products $g_{\theta}$ and $\tilde{g}_{\tilde{\theta}}$ both restrict from the $U$-invariant Hermitian inner product $H(-,\tau(-))$. The minimal vectors satisfy: $$\mathcal{M}(G,V)\subset \mathcal{M}(G^{\mathbb{C}}, V^{\mathbb{C}})\supset \mathcal{M}(\tilde{G},\tilde{V}), \ \ \ W\subset \mathcal{M}(G^{\mathbb{C}},V^{\mathbb{C}}).$$ Denote the Cartan decompositions by $V=V_+\oplus V_-$ and $\tilde{V}=\tilde{V}_+\oplus \tilde{V}_-$ respectively.

\begin{defn} Let $\Big{(}\rho^G_V, \rho^{\tilde{G}}_{\tilde{V}}\Big{)}$ be a compatible pair. Suppose $v\in V$ and $\tilde{v}\in \tilde{V}$ are such that $\tilde{v}\in G^{\mathbb{C}}v$, then we shall say that $Gv$ is \textsl{compatible} with $\tilde{G}\tilde{v}$. We write $Gv\sim \tilde{G}\tilde{v}$. \end{defn}

It is important to note the following result:

\begin{thm} [\cite{W3}] \label{main} Let $\Big{(}\rho^G_V, \rho^{\tilde{G}}_{\tilde{V}}, \rho^U_W\Big{)}$ be a compatible triple. Suppose $v\in V$ and $\tilde{v}\in \tilde{V}$ are such that: $\tilde{G}\tilde{v}\sim Gv$. Then $Gv\cap V_+\neq \emptyset$ (respectively $Gv\cap V_-\neq\emptyset$) if and only if $\tilde{G}\tilde{v}\cap \tilde{V}_+\neq\emptyset$ (respectively $\tilde{G}\tilde{v}\cap \tilde{V}_-\neq \emptyset$). \end{thm}

Observe that if there exist $v_+\in Gv$, then $\theta(v_+)=v_+$, i.e if $g\in G$ is such that $g\cdot v=v_+$, then there is an inner Cartan involution $\theta'$ of $g(-,-)$ such that $\theta'(v)=v$ using $g$.

We shall also state the following important result:

\begin{thm} [\cite{W3}] \label{54} Let $(\rho^G_{V}, \rho^U_{W})$ be a compatible pair. Let $v\in V$, then the following statements are equivalent: 

\begin{enumerate}[label=\Alph*]
\item{}There exist $w\in W$ such that $Uw\sim Gv$. 
\item{}There exist an inner Cartan involution $V\xrightarrow{\theta} V$ such that $\theta(v)=v$.
\item{}There exist $w\in W$ such that $Uw\cap Gv\neq \emptyset$. 
\end{enumerate} 
\end{thm}

In fact if there is a $w\in W$ and $v\in V$ such that $Uw\sim Gv$ then: $$\emptyset\neq Uw\cap Gv=Gv\cap \mathcal{M}(G,V)=Kv,$$ where $K=U\cap G$. 

A worked out example of compatible representations is given in the next section in the context of Wick-rotations of Lie groups.

\subsection{The isometry action on bilinear maps into the Lie algebra}\label{bilaction}
In this section we shall consider the action that we are going to use to prove our main result of this paper. We shall explain in detail that under a Wick-rotation, the isometry groups of the pseudo-inner product spaces induces compatible representations (see Defn. Section \ref{compatiblereps}). 

Suppose we have a Wick-rotation of pseudo-Riemannian Lie groups: \\
$(G,g)\subset (G^{\mathbb{C}},g^{\mathbb{C}})\supset (\tilde{G}, \tilde{g})$. As we have seen we can choose a map $g\in O(n,\mathbb{C})$ such that we obtain a compatible triple: $(\mathfrak{g}, \tilde{V}, W)$, with $\tilde{V}:=g(\tilde{\mathfrak{g}})$. We shall denote $\tilde{g}$ also for the pseudo-inner product on $\tilde{V}$ restricted from $g^{\mathbb{C}}$. We can choose a pseudo-orthonormal basis: $\{e_1,\dots, e_p, \dots ,e_n\}$ (of $g$) and similarly $\{\tilde{e}_1,\dots, \tilde{e}_{\tilde{p}},\dots, \tilde{e}_n\}$ (of $\tilde{g}$), such that $W$ is the real span of both the sets: $Y:=\{e_1,\dots, e_p, ie_{p+1} \dots ie_n\}$ and $\tilde{Y}:=\{\tilde{e}_1,\dots, \tilde{e}_{\tilde{p}},i\tilde{e}_{\tilde{p}+1},\dots ,i\tilde{e}_n\}$. Denote the corresponding Cartan involutions by $\theta$ (of $g$) and $\tilde{\theta}$ (of $\tilde{g}$). Note that $Y$ and $\tilde{Y}$ are both an orthonormal basis of $g^{\mathbb{C}}$.

Consider the complex isometry action of $O(n,\mathbb{C})$ on $\mathfrak{g}^{\mathbb{C}}$ by $g\cdot x:=g(x)$. This action restricts to the real isometry actions of $O(p,q)$ on $\mathfrak{g}$ and $O(\tilde{p},\tilde{q})$ on $V$ respectively. Let $\mathcal{V}$ and $\tilde{\mathcal{V}}$ denote the real vector spaces of bilinear maps: $\mathfrak{g}^2\rightarrow \mathfrak{g}$ (respectively $\tilde{V}^2\rightarrow \tilde{V}$). Thus $\mathcal{V}\subset \mathcal{V}^{\mathbb{C}}\supset \tilde{\mathcal{V}}$ are real forms, where $\mathcal{V}^{\mathbb{C}}$ is the complex vector space of complex bilinear maps: $({\mathfrak{g}^{\mathbb{C}}})^2\rightarrow \mathfrak{g}^{\mathbb{C}}.$ The complex isometry action naturally extends to a complex action of $O(n,\mathbb{C})$ on $b\in \mathcal{V}^{\mathbb{C}}$, by $$(g\cdot b)(x,y):=g\Big{(}b(g^{-1}(x),g^{-1}(y))\Big{)}, \ x,y\in \mathfrak{g}^{\mathbb{C}}, \ g\in O(n,\mathbb{C}).$$ Note that the action again restricts to action of the real isometry groups on $\mathcal{V}$ and $\tilde{\mathcal{V}}$ respectively. Denote the real actions by $\rho$ and $\tilde{\rho}$ respectively. The Cartan involution $\theta\in O(p,q)$ (respectively $\tilde{\theta}\in O(\tilde{p},\tilde{q})$) naturally extends to an involution of $\mathcal{V}$ (respectively $\tilde{\mathcal{V}}$), by the action: $\rho(\theta)$ (respectively $\tilde{\rho}(\tilde{\theta})$). The holomorphic inner product $g^{\mathbb{C}}$ extends naturally to a holomorphic inner product: $\textbf{g}^{\mathbb{C}}$, by defining: $$\textbf{g}^{\mathbb{C}}(b_1,b_2):=\sum^n_j g^{\mathbb{C}}\Big{(}b_1(y_j,y_j), b_2(y_j,y_j)\Big{)}.$$ Observe that if we change basis w.r.t to $\tilde{Y}$ instead then we obtain the same holomorphic inner product. Indeed this follows since we can find $g\in O(n,\mathbb{C})$ sending $Y\mapsto \tilde{Y}$. It is easy to check that $\mathcal{V}\subset (\mathcal{V}^{\mathbb{C}},\textbf{g}^{\mathbb{C}})\supset \tilde{\mathcal{V}}$ are real slices. Similarly if we define $\mathcal{W}$ to be all bilinear maps: $W^2\rightarrow W$, then by construction $\mathcal{W}$ is a compact real form of $(\mathcal{V}^{\mathbb{C}},\textbf{g}^{\mathbb{C}})$. Observe that the three real forms form a compatible triple in $\mathcal{V}^{\mathbb{C}}$. Therefore the actions form a compatible triple (see Section \ref{compatiblereps}). There is a natural choice of $O(n)$-invariant Hermitian inner product on $\mathcal{V}^{\mathbb{C}}$, namely: $H:=\textbf{g}^{\mathbb{C}}(\cdot, \mathcal{T}(\cdot))$, where $\mathcal{T}$ is the conjugation map of $\mathcal{W}$. This Hermitian inner product restricts to inner products on $\mathcal{V},\tilde{\mathcal{V}}$ and $\mathcal{W}$. Observe that the inner Cartan involutions of $\rho$ (respectively $\tilde{\rho}$) are those conjugate to $\rho(\theta)$ (respectively $\tilde{\rho}(\tilde{\theta})$). 

\subsection{Wick-rotatable tensors of pseudo-Riemannian manifolds}
For a Wick-rotation of Lie groups it is worth noting that the action in the previous section is just an example of a tensor action of $O(n,\mathbb{C})$ on a general tensor space of finite  form: $$\mathcal{V}^{\mathbb{C}}:=\bigoplus_{k,m}\Big{(}\Big{(}\bigotimes_{i=1}^k \mathfrak{g}^{\mathbb{C}}\Big{)}\bigotimes\Big{(}\bigotimes_{i=1} ^m (\mathfrak{g}^{\mathbb{C}})^*\Big{)}\Big{)},$$ induced from the isometry action of the holomorphic metric $g^{\mathbb{C}}$. Analogously we define: $$\mathcal{V}:=\bigoplus_{k,m}\Big{(}\Big{(}\bigotimes_{i=1}^k \mathfrak{g}\Big{)}\bigotimes\Big{(}\bigotimes_{i=1} ^m \mathfrak{g}^*\Big{)}\Big{)}, \ \ \ \tilde{\mathcal{V}}:=\bigoplus_{k,m}\Big{(}\Big{(}\bigotimes_{i=1}^k \tilde{\mathfrak{g}}\Big{)}\bigotimes\Big{(}\bigotimes_{i=1} ^m \tilde{\mathfrak{g}}^*\Big{)}\Big{)}.$$ The real isometry groups: $O(p,q)$ (respectively $O(\tilde{p},\tilde{q})$) restrict to acting on $\mathcal{V}$ (respectively $\tilde{\mathcal{V}}$). 

More generally for a Wick-rotation of pseudo-Riemannian manifolds: 
$$(M,g)\subset (M^{\mathbb{C}}, g^{\mathbb{C}})\supset (\tilde{M},\tilde{g}),$$ at a common point $p\in M\cap \tilde{M}$, then by replacing $\mathfrak{g}$ with $T_pM$ (respectively $\tilde{\mathfrak{g}}$ with $T_p\tilde{M}$), and $\mathfrak{g}^{\mathbb{C}}$ with $T_pM^{\mathbb{C}}$, we obtain the induced tensor action on real forms: $\mathcal{V}\subset \mathcal{V}^{\mathbb{C}}\supset \tilde{\mathcal{V}}$. 

One shall note that the metrics, Cartan involutions all extend naturally to these spaces via the tangent spaces. Moreover if $g\in O(n,\mathbb{C})$ is such that $T_pM$ and $g(T_p\tilde{M})$ form a compatible triple with a compact real form $W\subset T_pM^{\mathbb{C}}$, then naturally also $\mathcal{V}$ and $g\cdot\tilde{\mathcal{V}}$ form a compatible triple with $\mathcal{W}:=\bigoplus_{k,m}\Big{(}\Big{(}\bigotimes_{i=1}^k W\Big{)}\bigotimes\Big{(}\bigotimes_{i=1} ^m W^*\Big{)}\Big{)}$. 

For example the induced action of $O(n,\mathbb{C})$ on $End(T_pM^{\mathbb{C}})$ given by conjugation: $g\cdot f:=gfg^{-1}$ is just the tensor action: $g\cdot (v_1\otimes v_2):=g(v_1)\otimes g(v_2)$, for an $O(n,\mathbb{C})$-module isomorphism: $End(T_pM^{\mathbb{C}})\cong T_pM^{\mathbb{C}}\otimes T_pM^{\mathbb{C}}$. For a more detailed explanation of this example, and on the tensor action in general we refer to \cite{W3}.

Consider the action in the previous section for instance, then one should observe that the complex Lie bracket $v:=[-,-]$ of $\mathfrak{g}^{\mathbb{C}}$ is a vector in $\mathcal{V}$, but also there is a $g\in O(n,\mathbb{C})$ such that $\tilde{v}:=g\cdot v\in g\cdot\tilde{\mathcal{V}}$, i.e $v$ and $\tilde{v}$ lie in the same complex orbit: $O(n,\mathbb{C})v\ni\tilde{v}$, in such a way that $O(p,q)v\sim O(\tilde{p},\tilde{q})\tilde{v}$ are compatible real orbits. 

Thus it useful to define for general tensors $v\in \mathcal{V}$ and $\tilde{v}\in \tilde{\mathcal{V}}$:

\begin{defn} [\cite{W3}] Let $(M,g)$ and $(\tilde{M},\tilde{g})$ be two Wick-rotatable pseudo-Riemannian manifolds at a common point $p$. Then two tensors $v\in \mathcal{V}$ and $\tilde{v}\in \tilde{\mathcal{V}}$ are said to be \textsl{Wick-rotatable} at $p$, if they lie in the same $O(n,\mathbb{C})$-orbit, i.e $$O(n,\mathbb{C})v=O(n,\mathbb{C})\tilde{v}.$$  \end{defn}

One should note the subset of Wick-rotatable tensors consisting of those in the intersection: $v\in \mathcal{V}\cap\tilde{\mathcal{V}}$. Then there is a map $g\in O(n,\mathbb{C})$, such that $v$ and $g\cdot v\in g\cdot \tilde{\mathcal{V}}$ lie in the same complex orbit such that $O(p,q)v\sim O(\tilde{p},\tilde{q})\tilde{v}$ are compatible real orbits. More generally if $v$ and $\tilde{v}$ are Wick-rotatable i.e by definition $O(n,\mathbb{C})v=O(n,\mathbb{C})\tilde{v}$, then also $O(n,\mathbb{C})v=O(n,\mathbb{C})g\cdot \tilde{v}$. The main point is to be able to embed the vectors into the same complex orbit, such that we may apply the results of Section \ref{compatiblereps}.
\\

Let $(M,g)$ be a pseudo-Riemannian manifold of signature $(p,q)$, and $\theta\in O(p,q)$ be a Cartan involution of $g(-,-)$. Consider the isometry tensor action of $O(p,q)$ on $\mathcal{V}$ as above: $$O(p,q)\xrightarrow{\rho^{O(p,q)}_{\mathcal{V}}} GL(\mathcal{V}).$$ Then $\theta$ naturally extends to an involution $\Theta:=\rho^{O(p,q)}_{\mathcal{V}}(\theta)$ on $\mathcal{V}$, and the metric naturally induces a pseudo-inner product: $\textbf{g}(-,-)$  on $\mathcal{V}$ such that $\Theta$ is a Cartan involution. Let now $R\in \mathcal{V}$ be the Riemann tensor of $M$ at $p$ for $\mathcal{V}$ some tensor space. For example $R$ may be considered as a multilinear form into $T_pM$: $T_pM^3\rightarrow T_pM$, where the action is given by: $$(g\cdot R)(x,y,z):=g\Big{(}R(g^{-1}(x), g^{-1}(y), g^{-1}(z))\Big{)}, \ x,y,z\in T_pM, \ g\in O(p,q).$$ Another approach is to consider $R$ as a map in $End(\mathfrak{o}(p,q))\subset End\Big{(}End(T_pM)\Big{)}$ at the point $p$, where the action is given by: $$(g\cdot R)(X):=gR(g^{-1}Xg)g^{-1}, \ X\in \mathfrak{o}(p,q), \ g\in O(p,q).$$ The Riemann tensor at $p$ is viewed in this way for instance in \cite{W1}. One may show that these two actions are equivalent up to an $O(p,q)$-module isomorphism, by identifying the spaces with the tensor space: $T_pM\otimes T_pM\otimes T_pM\otimes T_pM$. 

We also recall the following definition:

\begin{defn}\label{RPM} If there exist a Cartan involution $\Theta$ such that $\Theta(R)=R$ (respectively $\Theta(R)=-R$), then the space $(M,g)$ at $p$ is called \textit{Riemann purely electric} (RPE) (respectively \textit{Riemann purely magnetic} (RPM)). If there is such a $\Theta$ for the Weyl tensor at $p$, then $(M,g)$ at $p$ is called \textit{purely electric} (PE) (respectively \textit{purely magnetic} (PM)). \end{defn}

Any Riemannian space $(M,g)$ is RPE at any point $p\in M$, since the identity map $\theta:=1_{T_pM}$ is a Cartan involution of the metric $g$ at any point, thus the Cartan involution extended to tensors: $\mathcal{V}$ is also the identity map, i.e $\Theta(R)=R$.

The Levi-Civita connection $\nabla$ of a real slice of a holomorphic Riemannian manifold $(M,g)\subset (M^{\mathbb{C}},g^{\mathbb{C}})$ at $p\in M$, restricts from the complex Levi-Civita connection: $\nabla^{\mathbb{C}}$ at $p$ of the complex manifold $M^{\mathbb{C}}$. Thus the real Riemann curvature tensor $R$ (of $M$) at $p$ restricts from the complex Riemann curvature tensor $R^{\mathbb{C}}$ of $M^{\mathbb{C}}$ (at $p$). Moreover if $ric_g$ denotes the real Ricci curvature: $T_pM^2\rightarrow \mathbb{R}$, defined by: $$ric_g(x,y):=Tr\Big{(}z\mapsto R(z,y)(x)\Big{)},$$ then using a real basis of $T_pM$ also for $T_pM^{\mathbb{C}}$ we see that restricting the complex Ricci curvature: $ric_{g^{\mathbb{C}}}$ on $M^{\mathbb{C}}$ to $T_pM$ we get $ric_g$. Similarly the real Ricci operator: $$Ric_g\in End(T_pM),\ \ g_p(Ric_g(x),y)=ric_g(x,y),$$ restricts form the complex Ricci curvature operator of $M^{\mathbb{C}}$ (at $p$). 

This means that in terms of Wick-rotations of pseudo-Riemannian manifolds at a common point $p$: $(M,g)\subset (M^{\mathbb{C}}, g^{\mathbb{C}})\supset (\tilde{M},\tilde{g}),$ we see that the pairs of tensors: $$(\nabla, \tilde{\nabla}), \ (R, \tilde{R}), \ (ric_g, ric_{\tilde{g}}), \ (Ric_g, Ric_{\tilde{g}}),$$ are examples of Wick-rotatable tensors (at $p$) in the intersection $\mathcal{V}\cap\tilde{\mathcal{V}}$. The induced isometry action of $O(n,\mathbb{C})$ on these tensors (induced from the isometry action of the metric) can be naturally seen as the actions: $$(g\cdot \nabla)(x,y):=g(\nabla_{g^{-1}x}g^{-1}y), \ \ (g\cdot R)(x,y,z):=g\Big{(}R(g^{-1}x,g^{-1}y,g^{-1}z)\Big{)}$$ and $$(g\cdot ric_g)(x,y):=ric_g(gx,gy), \ \ (g\cdot Ric_g)(x):=(g\circ Ric_g \circ g^{-1})(x).$$ 

An immediate new result is the following:

\begin{thm}\label{a} Let $(M,g)\subset (M^{\mathbb{C}}, g^{\mathbb{C}})\supset (\tilde{M},\tilde{g})$ be a Wick-rotation at a common point $p\in M\cap \tilde{M}$. Assume $(\tilde{M}, \tilde{g})$ is a Riemannian space. Then the following statements hold:
\begin{enumerate}
\item{} There exist a Cartan involution $\theta$ of $g$ such that $\nabla_{\theta(x)}\theta(y)=\theta(\nabla_xy)$ for all $x,y\in T_pM$.
\item{} There exist a Cartan involution $\theta$ of $g$ such that $ric_g\Big{(}\theta(x),\theta(y)\Big{)}=ric_g(x,y)$ for all $x,y\in T_pM$.
\item{} There exist a Cartan involution $\theta$ of $g$ such that $[\theta, Ric_g]=0$.
\item{} There exist a Cartan involution $\theta$ of $g$ such that $R\Big{(}\theta(x),\theta(y)\Big{)}\Big{(}\theta(z)\Big{)}=\theta\Big{(}R(x,y)(z)\Big{)}$ for all $x,y,z\in T_pM$. Thus $(M,g)$ is (RPE) at $p$.
\end{enumerate}
 \end{thm} 
\begin{proof} It is enough to spell out the proof for the first case, as the other cases are identical. Let $v:=\nabla\in \mathcal{V}$ and $\tilde{v}:=\tilde{\nabla}\in \tilde{\mathcal{V}}$, and consider the isometry tensor action as above. The vectors $v$ and $\tilde{v}$ are Wick-rotatable, thus up to a map $g\in O(n,\mathbb{C})$ we can assume the real actions are compatible, and that $v$ and $\tilde{v}$ lie in the same complex orbit, such that the real orbits: $O(p,q)v\sim O(\tilde{p},\tilde{q})$ are compatible. The result now follows from Theorem \ref{54}, since $O(\tilde{p},\tilde{q})=O(n)$ is a compact real form of $O(n,\mathbb{C})$. \end{proof}

One shall note that Case (4) of the theorem is proved in \cite{W1}. We shall strengthen Theorem \ref{a} for Wick-rotations of pseudo-Riemannian Lie groups in the last section of the paper, by proving that a Cartan involution of $g$ may be chosen to be a homomorphism of Lie algebras.

\section{An invariant of Wick-rotation of Lie groups}
In this section we shall prove the main theorem of the paper, which is an invariance result based on the existence of a Cartan involution of the Lie algebras (Defn. \ref{Cinv}).

Let $(G,g)\subset (G^{\mathbb{C}},g^{\mathbb{C}})\supset (\tilde{G},\tilde{g})$ be a Wick-rotation of Lie groups. Consider the action in Section \ref{bilaction} and following the notation there, then by our preparations, the main result is now easily deducible:

\begin{thm} \label{t} Suppose $(G,g)$ is a pseudo-Riemannian Lie group that can be Wick-rotated to another Lie group $(\tilde{G},\tilde{g})$. Then there exist a Cartan involution of $\mathfrak{g}$ if and only if there exist a Cartan involution of $\tilde{\mathfrak{g}}$.   \end{thm}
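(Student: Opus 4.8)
The plan is to translate the statement about Cartan involutions of the metric that are simultaneously Lie algebra automorphisms into a statement about a single distinguished vector of the induced representation $(V,\rho,b)$, and then to transport that statement across the Wick-rotation using the theory of compatible representations from \cite{3}.

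First I would record the elementary but crucial reformulation. A Cartan involution $\theta\in O(p,q)$ of $g(-,-)$ is an automorphism of $\mathfrak{g}$ precisely when it fixes the Lie bracket under the induced action: since $\theta^2=\mathrm{id}$, one computes $(\rho(\theta)[-,-])(x,y)=\theta[\theta x,\theta y]$, so that $\rho(\theta)[-,-]=[-,-]$ holds if and only if $\theta[x,y]=[\theta x,\theta y]$ for all $x,y\in\mathfrak{g}$, i.e. $\theta\in\mathrm{Aut}(\mathfrak{g})$. Because all Cartan involutions of $g(-,-)$ form a single $O(p,q)_0$-conjugacy class, and $\rho$ carries them exactly onto the inner Cartan involutions of $b$, the existence of a Cartan involution of $g(-,-)$ of Lie algebras is equivalent to the Lie bracket $[-,-]\in V$ being fixed by some inner Cartan involution of $(V,b)$; equivalently, the orbit $O(p,q)\cdot[-,-]$ meets the $(+1)$-eigenspace of a fixed $\rho(\theta)$. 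The identical statement holds for $\tilde{\mathfrak{g}}$, its bracket $\widetilde{[-,-]}\in\tilde{V}$, and $O(\tilde{p},\tilde{q})$.

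Next I would observe that the two distinguished vectors are Wick-rotatable: both $[-,-]$ and $\widetilde{[-,-]}$ are restrictions of the one complex Lie bracket of $\mathfrak{g}^{\mathbb{C}}$, so after the identification $\phi^{\mathbb{C}}$ of the compatible triple they lie in a common $O(n,\mathbb{C})$-orbit. Thus $(O(p,q),V,b,[-,-])$ and $(O(\tilde{p},\tilde{q}),\tilde{V},\tilde{b},\widetilde{[-,-]})$ are compatible representations in the sense of \cite{3} whose distinguished vectors are Wick-rotatable and share the complexification $V^{\mathbb{C}}=\tilde{V}^{\mathbb{C}}$. By the two reformulations the theorem now reduces to a purely representation-theoretic assertion: for compatible representations, a Wick-rotatable vector is fixed by an inner Cartan involution of its action if and only if its partner is. The mechanism I would invoke is that a vector fixed by $\rho(\theta)$ is automatically a minimal vector — the two eigenspaces of $\rho(\theta)$ are $b$-orthogonal and $\rho(X)$ for $X\in\mathfrak{p}_{\theta}$ interchanges them, so the derivative of the norm along $\exp(\rho(X))$ vanishes — which lets the condition be read off on the shared complex orbit and pulled back to $\tilde{V}$ through the common compact slice $\mathfrak{o}(n)$, so that the invariance results of \cite{3} apply with $v=[-,-]$ and $\tilde{v}=\widetilde{[-,-]}$.

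I expect the main obstacle to be not the algebra of the reformulation, which is routine, but the verification that the induced data $(\rho,b,\rho(\theta))$ on the space of (vector-valued) bilinear forms genuinely satisfies the hypotheses of the compatible-representation results of \cite{3} — in particular that $\rho(\theta)$ balances the action, so that the \emph{inner} Cartan involutions of $b$ are precisely the $O(p,q)$-conjugates of $\rho(\theta)$, and that the orbit relation for the bracket is faithfully preserved under the $\phi^{\mathbb{C}}$-identification — so that the invariance statement may legitimately be quoted.
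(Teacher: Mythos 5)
Your proposal follows essentially the same route as the paper's proof: both reformulate the existence of a Cartan involution of Lie algebras as the bracket $[-,-]\in V$ being fixed by $\rho(\theta)$, note that $[-,-]$ and $\widetilde{[-,-]}$ restrict the same complex bracket and hence are Wick-rotatable in a common $O(n,\mathbb{C})$-orbit, deduce that a $\rho(\theta)$-fixed bracket is a minimal vector (so its real orbit is closed), and transfer this through the compatible-representation results of \cite{3} (Cor.\ 5.3) to obtain a Cartan involution $\tilde{\theta}\in Aut(\tilde{\mathfrak{g}})$ fixing $\widetilde{[-,-]}$. The only difference is cosmetic: you spell out the eigenspace-orthogonality argument for why a fixed vector is minimal, a step the paper asserts without proof.
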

\begin{proof} Consider the group action and the notation as in Section \ref{bilaction}. Thus if $v:=[-,-]$ is the Lie bracket of $\mathfrak{g}^{\mathbb{C}}$ then $v\in \mathcal{V}$ and restricts to the Lie bracket of $\tilde{\mathfrak{g}}$. We can choose $g\in O(n,\mathbb{C})$ such that $g\cdot v\in \tilde{\mathcal{V}}$, i.e $v$ and $\tilde{v}:=g\cdot v$ lie in the same complex orbit, thus $O(p,q)v\sim O(\tilde{p}, \tilde{q})\tilde{v}$ are compatible real orbits. Suppose $\theta$ is a Cartan involution of $\mathfrak{g}$, and denote $\mathcal{V}=\mathcal{V}_+\oplus \mathcal{V}_-$ (respectively $\tilde{\mathcal{V}}=\tilde{\mathcal{V}}_+\oplus \tilde{\mathcal{V}}_-$ ) for the Cartan decomposition w.r.t to $\rho(\theta)$ (respectively $\tilde{\rho}(\tilde{\theta})$). Then the action of $\theta$ on $v$ fixes $v$, i.e $\rho(\theta)(v):=\theta\cdot v=v$, thus $v\in \mathcal{V}_+$. Hence the real orbit: $O(p,q)v$ intersects $\mathcal{V}_+$. But then by Theorem \ref{main}, it follows that there exist also $\tilde{v}'\in \tilde{\mathcal{V}}_+\cap O(\tilde{p},\tilde{q})\tilde{v}$. Therefore choose $h\in O(\tilde{p},\tilde{q})$ such that $h\cdot \tilde{v}=\tilde{v}'$. By conjugating $\tilde{\rho}(\tilde{\theta})$ by $h$ we obtain a Cartan involution $\tilde{\theta}'$ of $\tilde{g}$ such that $\tilde{\theta}'\cdot \tilde{v}=\tilde{v}$. Finally since $\tilde{V}:=g(\tilde{\mathfrak{g}})$ for some $g\in O(n,\mathbb{C})$ then the Cartan involution $g^{-1}\tilde{\theta}'g$ fixes $v$, i.e is a Cartan involution of $\tilde{g}$ and a homomorphism of Lie algebras. The converse is symmetric. The theorem is proved.    \end{proof}

We find it useful to define for future exploration:

\begin{defn} A property of a pseudo-Riemannian Lie group $(G,g)$ is said to be \textit{Wick-rotatable} if it is an invariant under a Wick rotation of Lie groups. \end{defn}

\begin{cor} The existence of a Cartan involution of $\mathfrak{g}$ is Wick-rotatable. \end{cor}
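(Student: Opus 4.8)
The plan is to read the corollary off directly from Theorem \ref{t}, since it is essentially a repackaging of that theorem in the language of the immediately preceding definition. First I would recall that, by definition, a property of a pseudo-Riemannian Lie group $(G,g)$ is \emph{Wick-rotatable} precisely when it is invariant under every Wick-rotation of Lie groups; concretely, whenever $(G,g)$ is Wick-rotated to $(\tilde{G},\tilde{g})$, the property must hold for $(G,g)$ if and only if it holds for $(\tilde{G},\tilde{g})$. The property at issue here is the existence of a Cartan involution of the metric that is simultaneously an automorphism of the Lie algebra, i.e. a Cartan involution ``of Lie algebras'' in the sense used throughout.

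Next I would invoke Theorem \ref{t} itself: for an arbitrary Wick-rotated pair $(G,g) \subset (G^{\mathbb{C}},g^{\mathbb{C}}) \supset (\tilde{G},\tilde{g})$, there exists a Cartan involution of $g(-,-)$ of Lie algebras if and only if there exists one of $\tilde{g}(-,-)$ of Lie algebras. This biconditional is exactly the invariance demanded by the definition, so the property is Wick-rotatable and the corollary follows. I do not expect any genuine obstacle: the whole mathematical content is carried by Theorem \ref{t}, whose proof already establishes both implications (the converse being identical). The only thing left to confirm is the purely formal point that the symmetric \emph{if and only if} furnished by Theorem \ref{t} matches the notion of invariance in the definition of a Wick-rotatable property, which it manifestly does.
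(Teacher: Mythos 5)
Your proposal is correct and matches the paper exactly: the paper states this corollary without a separate proof precisely because, as you observe, it is Theorem \ref{t} restated in the language of the definition of a Wick-rotatable property. The biconditional in Theorem \ref{t} is the required invariance, and nothing further is needed.
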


Other Wick-rotatable properties include (see for example \cite{Helgason} on complexification of real Lie algebras): being semi-simple, abelian, nilpotent, solvable, reductive. Note that being simple, is not Wick-rotatable, indeed as an example consider the Lie group $O(1,3)$ with the left-invariant metric being the Killing form. Then $\mathfrak{o}(1,3)$ is simple, but we may Wick-rotate $O(1,3)$ to $O(2,2)$ which is semi-simple but not simple, as $\mathfrak{o}(2,2)\cong \mathfrak{sl}_2(\mathbb{R})^2$ (two copies).
\\

We can now answer the question for when an arbitrary left-invariant metric can be Wick-rotated to a Riemannian left-invariant metric. One should compare the result with semi-simple Lie groups equipped with the left-invariant Killing form: $g:=-\kappa$.

\begin{cor} \label{Riem} Suppose $(G,g)\subset (G^{\mathbb{C}},g^{\mathbb{C}})$ is a real slice of Lie groups. Then $(G,g)$ can be Wick-rotated to a Riemannian Lie group $(\tilde{G}, \tilde{g})$ if and only if there exist a Cartan involution of $\mathfrak{g}$. \end{cor}
\begin{proof} $(\Rightarrow)$. The identity map $\tilde{\mathfrak{g}}\xrightarrow{1} \tilde{\mathfrak{g}}$ is a Cartan involution of $\tilde{\mathfrak{g}}$. Thus by Theorem \ref{t} the direction follows. Conversely suppose $\theta$ is a Cartan involution of $\mathfrak{g}$, and write $\mathfrak{g}=\mathfrak{k}\oplus \mathfrak{p}$, for the Cartan decomposition. Then is is not difficult to show that $\tilde{\mathfrak{g}}:=\mathfrak{k}\oplus i\mathfrak{p}$ is a Lie algebra and is a real form of $\mathfrak{g}^{\mathbb{C}}$. Moreover the complex metric $g^{\mathbb{C}}(-,-)$ restricts to an inner product on $\tilde{\mathfrak{g}}$ by construction. Thus if we let $\tilde{G}$ be the unique connected Lie subgroup of $G^{\mathbb{C}}$ (the real Lie group) with Lie algebra $\tilde{\mathfrak{g}}$, then the corollary follows. \end{proof}

In view of Remark \ref{signature} with the signature change $g\mapsto -g$, if $(G,g)$ can be Wick-rotated to a signature $(-,-,\cdots,-)$, then $(G,-g)$ can be Wick-rotated to a Riemannian space, thus there would exist a Cartan involution of $\mathfrak{g}$ w.r.t $-g$. We note in the Corollary that w.r.t the existing Cartan involution, then the Wick-rotated Riemannian Lie group may be chosen to be a virtual real form. Moreover note that since a Wick-rotation is a local condition then on Lie algebra level we have proved:

\begin{cor} \label{compatible} Let $(\mathfrak{g}^{\mathbb{C}}, g^{\mathbb{C}})$ be a holomorphic inner product space, where $\mathfrak{g}^{\mathbb{C}}$ is a complex Lie algebra. Let $\mathfrak{g}\subset \mathfrak{g}^{\mathbb{C}}$ be a real form which is a real slice. Assume there exist a compact real form $\mathfrak{u}\subset \mathfrak{g}^{\mathbb{C}}$ which is also a real Lie subalgebra. Let $\sigma$ be the conjugation map of $\mathfrak{g}$. Then there exist an automorphism $\phi\in Aut(\mathfrak{g}^{\mathbb{C}})\cap O(n,\mathbb{C})$ such that:
$\sigma(\phi(\mathfrak{u}))\subset \phi(\mathfrak{u}).$
  \end{cor}

Note in the corollary that if $\tau$ denotes the conjugation map of the compact real form $\phi(\mathfrak{u})\subset (\mathfrak{g}^{\mathbb{C}},g^{\mathbb{C}})$, then the map $\theta^{\mathbb{C}}:=\sigma\tau$ restricts to a Cartan involution $\theta$ of $\mathfrak{g}$.

Thus we have proved a general version of $\acute{E}$. Cartan's result: (\cite{Helgason}, Thm 7.1). Note also that the proof given there for the semi-simple case w.r.t to the Killing form is not valid for a general pair: $(\mathfrak{g},g)$ as above, indeed following the notation of the proof, it is not obvious that $N:=\sigma\tau\in O(n,\mathbb{C})\cap Aut(\mathfrak{g}^{\mathbb{C}})$.  

One shall note that it may be the case that a pseudo-Riemannian Lie group $(G,g)$ can be Wick-rotated to more than one Riemannian Lie group, in such a case we have the following (again one should compare this to semi-simple compact real forms w.r.t $-\kappa$):

\begin{prop}\label{local} Suppose there exist two Riemannian Wick-rotatable Lie groups: $(G,g)$ and $(\tilde{G},\tilde{g})$. Then $(G,g)$ and $(\tilde{G},\tilde{g})$ are locally isometric Lie groups. In particular if moreover $G$ and $\tilde{G}$ are both simply connected then $G$ and $\tilde{G}$ are isometric Lie groups.\end{prop}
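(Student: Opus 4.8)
The plan is to reduce the statement to the infinitesimal level and then produce an isometric Lie-algebra isomorphism $\mathfrak{g}\to\tilde{\mathfrak{g}}$. Since both metrics are left-invariant, each is completely determined by the inner product it induces on the Lie algebra together with the Lie bracket; consequently a linear map $\psi:\mathfrak{g}\to\tilde{\mathfrak{g}}$ which is simultaneously a Lie-algebra isomorphism and a linear isometry $\tilde{g}(\psi x,\psi y)=g(x,y)$ integrates to a local isomorphism of Lie groups that is a Riemannian isometry, and to a global one when the groups are simply connected (by the standard correspondence between simply connected Lie groups and their Lie algebras). Because a Wick-rotation of Lie groups induces one of the identity components, and $G_0,\tilde{G}_0$ have Lie algebras $\mathfrak{g},\tilde{\mathfrak{g}}$, it therefore suffices to construct such a $\psi$.

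First I would record what the Riemannian hypothesis buys us. As $g$ and $\tilde{g}$ are positive definite we have $O(p,q)=O(n)=O(\tilde{p},\tilde{q})$ compact, so the Cartan involutions are trivial, every orbit is closed, and in particular the bracket tensors $[-,-]\in V$ and $\widetilde{[-,-]}\in\tilde{V}$ are minimal vectors lying in compact orbits. Equivalently, $\mathfrak{g}$ and $\tilde{\mathfrak{g}}$ are Euclidean real slices of $(\mathfrak{g}^{\mathbb{C}},g^{\mathbb{C}})$ which are at the same time real forms of the complex Lie algebra.

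Next I would pass to a common comparison space. Using the map $V\xrightarrow{\phi}\tilde{\mathfrak{g}}$ of the preliminaries with $\phi^{\mathbb{C}}\in O(n,\mathbb{C})$, the triple $(\mathfrak{g},V,W)$ is compatible and $\phi$ is an isometry, so $V$ is again a Euclidean slice and $\phi^{-1}\cdot\widetilde{[-,-]}$ is a minimal vector on $V$ lying in the same $O(n,\mathbb{C})$-orbit as $[-,-]$. The crucial step is then to invoke the minimal-vector correspondence of real GIT (\cite{3}) in this compact situation: since both real slices are Euclidean the Cartan involutions entering (\cite{3}, Cor. 5.3) are the identity, so the freedom there collapses and the two Wick-rotatable minimal vectors are related by an honest isometry $k:\mathfrak{g}\to V$ with $k\cdot[-,-]=\phi^{-1}\cdot\widetilde{[-,-]}$, i.e. $k$ is a Lie-algebra isomorphism $(\mathfrak{g},[-,-])\to(V,\phi^{-1}\cdot\widetilde{[-,-]})$. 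Composing, $\psi:=\phi\circ k:\mathfrak{g}\to\tilde{\mathfrak{g}}$ is simultaneously an isometry and a Lie-algebra isomorphism, which is exactly the datum required in the first paragraph.

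I expect the main obstacle to be precisely this last invocation of \cite{3}: one must check that in the Euclidean case the orbit correspondence upgrades from ``the brackets lie in a common complex orbit'' (which, since both real brackets complexify to the same $[-,-]^{\mathbb{C}}$, is almost tautological and carries no information on its own) to ``the brackets are related by a single isometry respecting the bracket''. This is why the detour through $\phi$ and the compatible triple $(\mathfrak{g},V,W)$ is needed: it places the two real structures in genuinely comparable position, so that triviality of the Cartan involutions forces the conjugating element of $O(n,\mathbb{C})$ to restrict to an isometric automorphism of $(\mathfrak{g}^{\mathbb{C}},[-,-]^{\mathbb{C}})$ carrying $\mathfrak{g}$ onto $\tilde{\mathfrak{g}}$. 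Once $\psi$ is in hand, the remaining assertions (local isometry of $G_0,\tilde{G}_0$, and global isometry of $G,\tilde{G}$ when simply connected) are routine integration arguments.
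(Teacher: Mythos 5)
Your proposal is correct and follows essentially the same route as the paper: reduce to producing a single map in $O(n,\mathbb{C})$ that fixes the complex bracket and carries $\mathfrak{g}$ onto $\tilde{\mathfrak{g}}$, by transporting $\widetilde{[-,-]}$ into a tensor space comparable with $[-,-]$ via an isometric complex-linear identification and then using compactness of the Riemannian isometry group to upgrade the complex-orbit relation to a real isometry matching the brackets, which finally integrates to the (local, resp.\ global) isometry of groups. The only cosmetic difference is that you route the comparison through the compatible triple $(\mathfrak{g},V,W)$ and the minimal-vector results of \cite{3}, whereas the paper simply picks any $g\in O(n,\mathbb{C})$ with $g(\mathfrak{g})=\tilde{\mathfrak{g}}$, pulls $\widetilde{[-,-]}$ back to a real vector on $\mathfrak{g}$, and quotes Richardson--Slodowy \cite{4} directly for the statement that two real vectors in the same complex orbit of a compact group's complexification lie in the same $O(n)$-orbit -- exactly the ``honest isometry $k$'' your last step asks for.
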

\begin{proof} Choose a map $g\in O(n,\mathbb{C})$ mapping $\mathfrak{g}\mapsto \tilde{\mathfrak{g}}$. Consider the action and notation of Section \ref{bilaction}. Using the map $g$ the Lie bracket $v:=[-,-]$ of $\mathfrak{g}^{\mathbb{C}}$ lies in $\mathcal{V}$, and also $\tilde{v}:=g^{-1}\cdot v\in \mathcal{V}$. Thus $O(n,\mathbb{C})v\ni \tilde{v}$. But since $O(n)$ (the isometries of $(\mathfrak{g},g)$) is compact, then we may choose $h\in O(n)$ such that the vectors $v$ and $\tilde{v}$ lie in the same $O(n)$-orbit, i.e $h\cdot v=\tilde{v}$. Or in other words: $$gh\cdot v=v.$$ Now since $h$ maps $\mathfrak{g}$ to $\mathfrak{g}$ by definition and $gh$ fixes $v$, i.e fixes the complex Lie bracket. Then $gh\in O(n,\mathbb{C})$ is an automorphism of complex Lie algebras, and it maps $\mathfrak{g}\mapsto \tilde{\mathfrak{g}}$. Therefore since the metrics are left-invariant we can conclude that $(G, g)$ and $(\tilde{G},\tilde{g})$ are locally isometric Lie groups as required. Finally if $G$ and $\tilde{G}$ are both simply connected then since any local isometry is also an isometry, it follows that $(G,g)\cong (\tilde{G}, \tilde{g})$ are isometric Lie groups. The proposition is proved.  \end{proof}

Thus as a corollary for compact real forms:

\begin{cor} Let $(\mathfrak{g}^{\mathbb{C}}, g^{\mathbb{C}})$ be a holomorphic inner product space, where $\mathfrak{g}^{\mathbb{C}}$ is a complex Lie algebra. Let $\mathfrak{u}_1\subset \mathfrak{g}^{\mathbb{C}}\supset \mathfrak{u}_2$ be two real Lie subalgebras which are compact real forms. Then there exist a linear isomorphism: $\mathfrak{u}_1\xrightarrow{\phi} \mathfrak{u}_2$, such that $\phi\in O(n,\mathbb{C})\cap Aut(\mathfrak{g}^{\mathbb{C}})$.  \end{cor}

In the case of a complex semi-simple Lie group: $(G^{\mathbb{C}},-\kappa)$, equipped with the left-invariant Killing form, then any compact real form: $\mathfrak{u}\subset \mathfrak{g}$, gives rise to a real form: $U\subset G^{\mathbb{C}}$ (thus is by definition a Riemannian real slice of Lie groups). It follows by the theory of semi-simple Lie groups that any two compact real forms of $G^{\mathbb{C}}$ are isomorphic Lie groups, and thus also isometric Lie groups.

Let $(G,g)\subset (G^{\mathbb{C}}, g^{\mathbb{C}})$ be a real slice of Lie groups. Recall again the action of Section \ref{bilaction}, and consider the Lie bracket $[-,-]$ of $\mathfrak{g}^{\mathbb{C}}$. Thus $[-,-]\in \mathcal{V}$ (the bilinear forms $\mathfrak{g}^2\rightarrow \mathfrak{g}$.) Suppose as usual that the signature of $g$ is $(p,q)$. Then from real GIT there are a finite number of real $O(p,q)$-orbits in the complex orbit: $O(n,\mathbb{C})\cdot[-,-]$, i.e $$O(n,\mathbb{C})\cdot [-,-]\cap \mathcal{V}=\cup^{m}_{i=1} O(p,q)v_i, $$ for some $m\geq 1$. We shall put an equivalence relation on the real slices of Lie groups of $(G^{\mathbb{C}}, g^{\mathbb{C}})$ by the relation of local isometry. Let $[(G,g)]$ denote an equivalence class, thus $[(G,g)]=[(\tilde{G}, \tilde{g})]\Leftrightarrow (G,g)\cong (\tilde{G},\tilde{g})$ (locally).

We can thus generalise Proposition \ref{local} in the following sense:

\begin{thm} Let $(G,g)\subset (G^{\mathbb{C}}, g^{\mathbb{C}})$ be a real slice of Lie groups, and $(p,q)$ be the signature of $g$. Let $O(n,\mathbb{C})\cdot [-,-]\cap \mathcal{V}=\cup^{m}_{i=1} O(p,q)v_i.$ Then there are exactly $m$ equivalence classes (up to a local isometry) of real slices of Lie groups in $(G^{\mathbb{C}}, g^{\mathbb{C}})$ with signature $(p,q)$. In particular if $m=1$ then all real slices of Lie groups in $(G^{\mathbb{C}}, g^{\mathbb{C}})$ of signature $(p,q)$ are locally isometric.   \end{thm}
\begin{proof} Suppose $(\tilde{G}, \tilde{g})$ is Wick-rotated to $(G,g)$ of the same signature. Let $h\in O(n,\mathbb{C})$ be such that $h(\mathfrak{g})=\tilde{\mathfrak{g}}$, then $\tilde{v}:=h^{-1}\cdot [-,-]\in \mathcal{V}$ is in the same complex orbit as $[-,-]$. Thus we have a mapping of an equivalence class: $$[(\tilde{G},\tilde{g})]\mapsto O(p,q)\tilde{v}.$$ The map does not depend on the choice of $h$, since if $h_1\in O(n,\mathbb{C})$ also maps $h_1(\mathfrak{g})=\tilde{\mathfrak{g}}$, then $h^{-1}h_1\in O(p,q)$, and $h^{-1}h_1\cdot \tilde{v}=h^{-1}\cdot [-,-]$. The map is well-defined. Indeed let $(G_1, g_1)$ map to $O(p,q)v_1:=O(p,q)\cdot(h_1^{-1}\cdot [-,-])$ for some $h_1\in O(n,\mathbb{C})$ with $h_1(\mathfrak{g})=\mathfrak{g}_1$. Assume $(G_1,g_1)$ is locally isometric to $(\tilde{G},\tilde{g})$. Then there exist $g\in O(n,\mathbb{C})\cap Aut(\mathfrak{g}^{\mathbb{C}})$ such that $g(\mathfrak{g}_1)=\tilde{\mathfrak{g}}$, therefore: $$g_1:=h^{-1}gh_1\in O(p,q), \ \ g_1\cdot v_1=h^{-1}g\cdot [-,-]=h^{-1}\cdot [-,-]=\tilde{v},$$ using that $g$ fixes the Lie bracket.

To see that the map is injective, then suppose $[(G_j, g_j)]$ maps to the same orbit for $j=1,2$. Then by definition: $[(G_j, g_j)]\mapsto O(p,q)\cdot (h_j^{-1}\cdot [-,-])$ for maps $h_j\in O(n,\mathbb{C})$ with $h_j(\mathfrak{g})=\mathfrak{g}_j$. Thus since the orbits are the same, then choose $g\in O(p,q)$ such that $g\cdot (h_1^{-1}\cdot [-,-])=h_2^{-1}\cdot [-,-]$, i.e $h_2gh_1^{-1}\cdot [-,-]=[-,-]$ so that $h_2gh_1^{-1}\in O(n,\mathbb{C})\cap Aut(\mathfrak{g}^{\mathbb{C}})$. Note that $h_2gh_1^{-1}$ maps $\mathfrak{g}_1\mapsto \mathfrak{g}_2$. It follows that $[(G_1, g_1)]=[(G_2, g_2)]$ as required. 

It remains to show that the map is surjective. Indeed if $v_j\in \mathcal{V}$ is among the $v_1,\dots, v_m$, then there exist $h\in O(n,\mathbb{C})$ such that $h\cdot v_j=[-,-].$ If $V_1\subset \mathfrak{g}^{\mathbb{C}}$ denotes the real form (of vector spaces) $h(\mathfrak{g})$, then: $$[V_1,V_1]=h\Big{(}v(h^{-1}(V_1),h^{-1}(V_1))\Big{)}\subset h(v(\mathfrak{g},\mathfrak{g}))\subset h(\mathfrak{g}):=V_1.$$ Therefore $V_1$ is a real form of Lie algebras, thus redefine $V_1:=\mathfrak{g}_1$. Let $G_1$ be the virtual Lie subgroup of $G^{\mathbb{C}}$ with Lie algebra $\mathfrak{g}_1$, then $G_1$ is a real slice of Lie groups of signature $(p,q)$. Thus $[(G_1,g_1)]\mapsto O(p,q)v_j$, which proves that the map is surjective. The theorem is proved.
    \end{proof}

There are classes of Lie algebras with $m=1$, for instance the trivial case of abelian Lie algebras. However in general $m\neq 1$. Indeed even a semi-simple Lie algebra is not determined by the signature of its Killing form: $-\kappa$. As an example consider the semi-simple real forms $\mathfrak{o}(1,4)\subset (\mathfrak{o}(5,\mathbb{C}), -\kappa)\supset \mathfrak{o}(2,3)$. Then the signatures are $(6,4)$ and $(4,6)$ respectively. Thus $\mathfrak{o}(1,4)\oplus \mathfrak{o}(2,3)$ is a real form of $(\mathfrak{o}(5,\mathbb{C})^2, -\kappa)$ of signature $(10,10)$. But also if $\mathfrak{o}(5,\mathbb{C})_{\mathbb{R}}$ denotes the real Lie algebra of $\mathfrak{o}(5,\mathbb{C})$, then it is also a real form of $\mathfrak{o}(5,\mathbb{C})^2$ which is simple, also of signature $(10,10)$, thus $$\mathfrak{o}(5,\mathbb{C})_{\mathbb{R}}\ncong \mathfrak{o}(1,4)\oplus \mathfrak{o}(2,3),$$ and so $m\geq 2$ in this example. 

We now give two examples, one where a Lie group is Wick-rotatable to a Riemannian Lie group, and the other where a Lie group is not Wick-rotatable to a Riemannian Lie group.

\begin{ex} \label{heis} Let $H_3(\mathbb{R})\subset H_3(\mathbb{C})$ be the 3-dimensional real and complex Heisenberg groups. The Lie algebra of $H_3(\mathbb{R})$ denoted: $\mathfrak{h}_3(\mathbb{R})$, is the set of strictly upper triangular $3\times 3$ matrices. A basis of the Lie algebra is given by $\{e_1,e_2,e_3\}$ with, $$[e_1,e_2]=e_3, \ \ [e_1,e_3]=[e_2,e_3]=0.$$ We may identify $\{e_j\}_j$ with the standard basis of $\mathbb{R}^3$. Let $g(-,-)$ be the standard Lorentzian pseudo-inner product on $\mathbb{R}^3$, i.e of signature $(+,+,-)$. Thus $(H_3(\mathbb{R}), -g)$ is a real slice (of Lie groups) of $(H_3(\mathbb{C}), -g^{\mathbb{C}})$. Note that $g(-,-)$ is not bi-invariant, since $g([e_1,e_2], e_3)=-1\neq g(e_1, [e_2,e_3])=0.$ Define the linear map: $\theta\in End(\mathfrak{h}_3(\mathbb{R}))$ by: $$\lambda_1 e_1+\lambda _2 e_2+\lambda_3 e_3\mapsto -\lambda_1 e_1-\lambda _2 e_2+\lambda_3 e_3.$$ Then it is easy to show that this is an involution of Lie algebras, and moreover $\theta$ is a Cartan involution of $\mathfrak{h}_3(\mathbb{R})$ w.r.t $-g(-,-)$, thus by Corollary \ref{Riem} it follows that $H_3(\mathbb{R})$ can be Wick-rotated to a Riemannian Lie group $\tilde{G}$. Note that $\tilde{G}$ is the real form of $H_3(\mathbb{C})$ consisting of matrices of the form: $\begin{bmatrix} 1 & ix & iy \\ 0 & 1 & z \\ 0 & 0 & 1 \\ \end{bmatrix}$ for $x,y,z\in \mathbb{R}$.   \end{ex}

\begin{ex} \label{ex2} Consider the real form: $G:=SL_2(\mathbb{R})^2\subset G^{\mathbb{C}}:=SL_2(\mathbb{C})^2$. Then we can equip $G$ with a left-invariant metric $g(-,-)$ of signature $(3,3)$, by equipping one copy with $-\kappa$ and the other copy with $\kappa$. The real forms up to isomorphism of $\mathfrak{sl}_2(\mathbb{C})^2$ are: $$\mathfrak{sl}_2(\mathbb{R})^2, \ \mathfrak{sl}_2(\mathbb{R})\oplus \mathfrak{su}(2), \ \mathfrak{su}(2)^2, \ \mathfrak{o}(1,3).$$ Let $\tilde{\mathfrak{g}}$ be one of these real forms (except the last one), then we may Wick-rotate $G$ to the corresponding real forms $\tilde{G}$ of $SL_2(\mathbb{C})^2$ of signature either: $(3,3)$ or $(1,5)$. In the case of Wick-rotating to $(SU(2)^2, \tilde{g})$ we get a signature of $(3,3)$. Now note that if $G$ can be Wick-rotated to a signature: $(0,6)$ or Riemannian: $(6,0)$, then we can find (by Corollary \ref{Riem}) a Cartan involution of $\mathfrak{su}(2)^2$ w.r.t $-\tilde{g}$ or $+\tilde{g}$ respectively: $$\mathfrak{su}(2)^2\xrightarrow{\theta} \mathfrak{su}(2)^2.$$ Suppose the Cartan involution is w.r.t $\tilde{g}$. Then if $\mathfrak{su}(2)^2=\mathfrak{k}\oplus \mathfrak{p},$ is the Cartan decomposition w.r.t $\theta$, we have $\mathfrak{g}_1:=\mathfrak{k}\oplus i\mathfrak{p}\cong \mathfrak{o}(1,3)$. Indeed $\theta^{\mathbb{C}}$ is a Cartan involution of $\mathfrak{g}_1$ (w.r.t $-\kappa$), thus $-\kappa$ has signature $(3,3)$, hence it must be the case that $\mathfrak{g}_1\cong \mathfrak{o}(1,3)$. 

Now consider the copy $\mathfrak{o}(1,3)$ identified as the real form $$\{(x,\tau(x))|x\in \mathfrak{sl}_2(\mathbb{C})\}\subset \mathfrak{sl}_2(\mathbb{C})^2.$$ We can extend the inner product $g^{\mathbb{C}}:=g_0$ on $\mathfrak{g}_1$ to $\mathfrak{o}(1,3)$ by using an isomorphism $\phi$ (of Lie algebras) $\mathfrak{g}_1\cong \mathfrak{o}(1,3)$, thus by complexifying we get a holomorphic inner product $b$ on $\mathfrak{sl}_2(\mathbb{C})^2$. On each copy of $\mathfrak{sl}_2(\mathbb{C})$ we get $b=\lambda\kappa$ where $\kappa$ is the Killing form on $\mathfrak{sl}_2(\mathbb{C})$, thus we may assume the holomorphic inner product is of the form $b=\lambda_1 \kappa+\lambda_2\kappa$. Using that $g^\mathbb{C}=-\kappa\oplus \kappa$, then necessarily $\lambda_1, \lambda_2$ are real. Now for $X:=(x,\tau(x))\in \mathfrak{o}(1,3)$ we get \begin{align*}b(X,X)&=\lambda_1\kappa(x,x)+\lambda_2\kappa(\tau(x),\tau(x))\\&=\lambda_1\kappa(x,x)+\lambda_2\overline{\kappa(x,x)}\\&=(\lambda_1+\lambda_2)Re(\kappa(x,x))+i(\lambda_1-\lambda_2)Im(\kappa(x,x)). \end{align*}  Thus since $b(X,X)$ is real, then necessarily $\lambda_1=\lambda_2$, so $b$ is proportional to the Killing form on $\mathfrak{sl}_2(\mathbb{C})^2$, and so since $\phi^{\mathbb{C}}$ is an isomorphism: $$\Big{(}\mathfrak{sl}_2(\mathbb{C})^2, g^{\mathbb{C}}\Big{)}\cong \Big{(}\mathfrak{sl}_2(\mathbb{C})^2, b\Big{)},$$ then we deduce that $g^{\mathbb{C}}=-\kappa\oplus \kappa$ is also proportional to the Killing form, this is a contradiction. The argument for the signature case: $(0,6)$, is identical with the change: $g\mapsto -g$. We conclude that $(G,g)$ can not be Wick-rotated to a Riemannian Lie group nor to a Lie group of signature $(0,6)$. 

   \end{ex}

One shall note that Proposition \ref{local} does not hold for a general non-Riemannian signature. Indeed consider the previous example then $SL_2(\mathbb{R})^2$ has signature $(3,3)$ and can be Wick-rotated to $SU(2)^2$ also of signature $(3,3)$, but they are not locally isometric (since their Lie algebras are non-isomorphic). Thus $m\geq 2$ in the previous theorem. 

We end this section by considering a result on semi-simple Lie groups.                                  

\begin{prop} Let $(G,g)\subset (G^{\mathbb{C}},g^{\mathbb{C}})$ be a real slice, and $G$ be semi-simple. Then $(G,g)$ can be Wick-rotated to a Riemannian compact Lie group if and only if there exist a Cartan involution $\theta$ of $\mathfrak{g}$ (w.r.t $g$) which is also a Cartan involution of $\mathfrak{g}$ (w.r.t $-\kappa$). \end{prop}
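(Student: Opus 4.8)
The plan is to establish the two implications separately, dispatching the direction $(\Leftarrow)$ directly from Corollary \ref{slice} and reserving the genuine work for $(\Rightarrow)$. For $(\Leftarrow)$ I would start from a $\theta$ that is simultaneously a Cartan involution of the metric $g$ and of the Killing form $-\kappa$, and write $\mathfrak{g}=\mathfrak{t}\oplus\mathfrak{p}$ for the common Cartan decomposition. Corollary \ref{slice} already produces a Wick-rotation of $G$ to the Riemannian real form with Lie algebra $\tilde{\mathfrak{g}}=\mathfrak{t}\oplus i\mathfrak{p}$, so the only additional point is that this $\tilde{\mathfrak{g}}$ is \emph{compact}: since $\theta$ is a Cartan involution of $-\kappa$, the subspace $\mathfrak{t}\oplus i\mathfrak{p}$ is exactly the compact real form of $\mathfrak{g}^{\mathbb{C}}$ attached to $\theta$, whence $\kappa$ is negative definite there and $\tilde{\mathfrak{g}}$ is compact semisimple. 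By Weyl's theorem the simply connected group with this Lie algebra is compact, so the immersed real form $\tilde{G}$, being a continuous image of it, is a Riemannian compact Lie group.

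For the direction $(\Rightarrow)$, suppose $(G,g)$ is Wick-rotated to a Riemannian compact $(\tilde{G},\tilde{g})$. Corollary \ref{Riem} immediately furnishes a Cartan involution $\theta$ of the metric $g$ which is a homomorphism of Lie algebras; let $\mathfrak{g}=\mathfrak{t}\oplus\mathfrak{p}$ be its Cartan decomposition. By Corollary \ref{slice} the real form $\mathfrak{u}:=\mathfrak{t}\oplus i\mathfrak{p}$ carries the restriction of $g^{\mathbb{C}}$ as an inner product, so the associated connected real form $U\subset G^{\mathbb{C}}$ is a Riemannian Wick-rotatable Lie group. Now $U$ and $\tilde{G}$ are two Riemannian real slices of the same $(G^{\mathbb{C}},g^{\mathbb{C}})$, so Proposition \ref{local} applies and yields an isomorphism of Lie algebras $\mathfrak{u}\cong\tilde{\mathfrak{g}}$. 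Since $\tilde{G}$ is compact and semisimplicity is Wick-rotatable, $\tilde{\mathfrak{g}}$ is a compact real form of $\mathfrak{g}^{\mathbb{C}}$; hence so is $\mathfrak{u}$, and $\kappa$ is therefore negative definite on $\mathfrak{u}=\mathfrak{t}\oplus i\mathfrak{p}$.

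It remains to convert this definiteness into the Cartan condition for $-\kappa$. Using that $\kappa$ is $\theta$-invariant one checks $\mathfrak{t}$ and $\mathfrak{p}$ are $\kappa$-orthogonal; negative definiteness of $\kappa$ on $\mathfrak{u}$ gives $\kappa<0$ on $\mathfrak{t}$, while $\kappa(iy,iy)=-\kappa(y,y)$ forces $\kappa>0$ on $\mathfrak{p}$. Consequently, for $z=x+y\neq 0$ with $x\in\mathfrak{t}$, $y\in\mathfrak{p}$, one gets $-\kappa(z,\theta z)=-\kappa(x,x)+\kappa(y,y)>0$, so $\theta$ is a Cartan involution of $-\kappa$ as well, which completes the proof. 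The main obstacle is precisely this upgrade in $(\Rightarrow)$: Corollary \ref{Riem} only guarantees a Cartan involution of the \emph{metric}, and there is a priori no control on the signs of $\kappa$ along $\mathfrak{t}$ and $\mathfrak{p}$. The leverage comes from Proposition \ref{local}, which transfers the compactness of $\tilde{\mathfrak{g}}$ to $\mathfrak{u}$; once $\mathfrak{u}$ is known to be a compact real form the sign computation is forced.
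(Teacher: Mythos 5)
Your proof is correct and follows essentially the same route as the paper's: both directions use Corollary \ref{Riem}, the construction $\tilde{\mathfrak{g}}=\mathfrak{t}\oplus i\mathfrak{p}$ from Corollary \ref{slice}, and Proposition \ref{local} to transfer compactness to $\mathfrak{u}$. The only difference is that you verify by an explicit sign computation the standard semisimple fact which the paper simply invokes, namely that $\theta$ is a Cartan involution of $-\kappa$ if and only if $\mathfrak{t}\oplus i\mathfrak{p}$ is a compact real form.
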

\begin{proof} $(\Rightarrow)$. If $(G,g)$ is Wick-rotated to a Riemannian Lie group, then by Corollary \ref{Riem}, we can choose a Cartan involution $\theta$ of $\mathfrak{g}$. Denote $\mathfrak{g}=\mathfrak{k}\oplus \mathfrak{p}$ for the Cartan decomposition. Then following the proof of Corollary \ref{Riem}, then we can find a Riemannian Lie group $\tilde{G}$ with Lie algebra: $\tilde{\mathfrak{g}}:=\mathfrak{k}\oplus i\mathfrak{p}$, which is Wick-rotated to $G$ in $(G^{\mathbb{C}},g^{\mathbb{C}})$. By Proposition \ref{local}, $\tilde{\mathfrak{g}}$ is compact, since we can Wick-rotate $G$ to a Riemannian compact Lie group (by assumption). But since $\theta$ is a Cartan involution of $\mathfrak{g}$ (w.r.t $-\kappa$) if and only if $\tilde{\mathfrak{g}}$ is compact, then the direction is proved. $(\Leftarrow)$. Suppose $\theta$ is a Cartan involution of $\mathfrak{g}$ w.r.t $g(-,-)$ and $-\kappa$ simultaneously. Thus if $\mathfrak{u}:=\mathfrak{k}\oplus i\mathfrak{p}$ is the compact real form of $\mathfrak{g}^{\mathbb{C}}$ associated with $\theta$, then there exist a compact real form $U\subset G^{\mathbb{C}}$ with Lie algebra $\mathfrak{u}$. The proposition follows.    \end{proof}

Thus since we may lift a local Cartan involution: $\mathfrak{g}\xrightarrow{\theta}\mathfrak{g}$, to a global Cartan involution: $G\xrightarrow{\Theta} G$, then in view of the previous proposition, there is a $\Theta$ which is an isometry of $(G,g)$, i.e $\Theta\in Isom(G)$. Observe also that if there exist a real slice of Lie groups of $G^{\mathbb{C}}$ which is compact Riemannian, then the possible signatures $(p,q)$ w.r.t $g^{\mathbb{C}}$ is a subset of the possible signatures of $-\kappa$ (of $\mathfrak{g}^{\mathbb{C}}$).

It is tempting to think that if $(G,g)$ can be Wick-rotated to a compact semi-simple Riemannian Lie group, then it would be locally isometric to $(G,-\lambda\kappa)$ ($\lambda>0$). However this is false, indeed consider $G:=SL_2(\mathbb{R})^2$ equipped with the metric $g:=-\kappa\oplus -2\kappa$. We can Wick-rotate to the compact Riemannian Lie group: $SU(2)^2$. Consider the real form $\cong \mathfrak{o}(1,3)$ identified with the set: $$\{(x,\tau(x))|x\in \mathfrak{sl}_2(\mathbb{C})_{\mathbb{R}}\}\subset \mathfrak{sl}_2(\mathbb{C})^2,$$ where $\tau$ is the conjugation map of $\mathfrak{sl}_2(\mathbb{C})$ with fix-points $\mathfrak{su}(2)$. If $(G,g)$ is locally isometric to $\lambda\kappa$ for some $\lambda\in \mathbb{R}$, then we can Wick-rotate to a $\tilde{G}\subset G^{\mathbb{C}}$ with Lie algebra $\mathfrak{o}(1,3)$. However $\mathfrak{o}(1,3)$ on $g^{\mathbb{C}}$ is not a real slice. We thus conclude that $(G,g)$ is not locally isometric to $(G,\lambda\kappa)$ for any $\lambda\in \mathbb{R}$.

\begin{rem} One shall observe that if $(G,g)$ and $(\tilde{G},\tilde{g})$ are pseudo-Riemannian spaces, where $G$ and $\tilde{G}$ are Lie groups, but the metrics are not assumed to be left-invariant, then the proof of Theorem \ref{t} is still valid. The direction $(\Rightarrow)$ of Corollary \ref{Riem} is also valid, however the direction $(\Leftarrow)$ does not necessarily hold. \end{rem}

\section{Conjugacy of Cartan involutions}

Given a pseudo-Riemannian Lie group $(G,g)$, with two Cartan involutions $\theta_j$ ($j=1,2$) of $\mathfrak{g}$, one may wonder if they are conjugate in $Aut(\mathfrak{g})$. This is in fact true as we will show here, and we note again the resemblance with semi-simple Lie groups $G$ and Cartan involutions of $\mathfrak{g}$ (w.r.t $-\kappa$).

\begin{thm}\label{conjugate} Suppose $(G,g)\subset (G^{\mathbb{C}}, g^{\mathbb{C}})$ is a pseudo-Riemannian Lie group. Assume there exist two Cartan involutions: $\theta_1,\theta_2$ of $\mathfrak{g}$. Then $\theta_1$ is conjugate to $\theta_2$ in $Aut(\mathfrak{g})_0\cap O(p,q)_0$. \end{thm} 
\begin{proof} Write $\mathfrak{g}=\mathfrak{k}_1\oplus \mathfrak{p}_1=\mathfrak{k}_2\oplus \mathfrak{p}_2$ for the Cartan decompositions w.r.t $\theta_1$ and $\theta_2$ respectively. Denote also: $\mathfrak{k}_j\oplus i\mathfrak{p}_j:=\mathfrak{u}_j$ ($j=1,2$) for the real forms of $\mathfrak{g}^{\mathbb{C}}$. There exist Wick-rotations of $G$ to connected virtual real forms: $U_j\subset G^{\mathbb{C}}$ with Lie algebras $\mathfrak{u}_j$ which are Riemannian (by Corollary \ref{Riem}). If $\sigma$ denotes the conjugation map w.r.t $\mathfrak{g}$, and $\tau_j$ denotes the conjugation map of $\mathfrak{u}_j$, then we have $\theta^{\mathbb{C}}_j=\sigma\tau_j$. Now since $\theta_1$ is conjugate to $\theta_2$ in $O(p,q)_0$ (see Remark \ref{Ocon}), i.e there is a $\phi\in O(p,q)_0$ such that $$\phi\theta_1\phi^{-1}=\theta_2,$$ as linear maps, then $g:=\phi^{\mathbb{C}}$ sends $\mathfrak{u}_1\mapsto \mathfrak{u}_2$. Consider the action in Section \ref{bilaction} and the notation there. If $v:=[-,-]$ is the complex Lie bracket of $\mathfrak{g}^{\mathbb{C}}$, then $v\in \mathcal{V}$ and $w:=g^{-1}\cdot v\in \mathcal{W}$ (i.e is a bilinear map $\mathfrak{u}_1^2\rightarrow \mathfrak{u}_1$) lie in the same complex orbit. Note that $(\mathfrak{g},\mathfrak{u}_1)$ is a compatible pair. Now $w$ is a minimal vector since it belongs to $\mathcal{W}$, i.e: $$w\in O(n)_0w\cap O(p,q)_0v=K_0v,$$ where $$K:=\{g\in O(p,q)|g\theta_1=\theta_1 g\}\subset O(p,q),$$ is the maximal compact subgroup associated with the fixed global Cartan involution of $O(p,q)$: $g\mapsto \theta_1 g\theta_1$. Thus there is an element $k_0\in K_0\subset O(p,q)_0$ such that $k_0v=w$, in other words: $gk_0\cdot v=v$. Hence $gk_0\in O(p,q)_0\cap Aut(\mathfrak{g})$, and it follows that: $[\sigma, gk_0]=0$, i.e $$\theta_2=gk_0\circ\theta_1\circ k^{-1}_0g^{-1}.$$ The corollary is proved.   \end{proof}                                 

Thus on Lie algebras we get the following nice corollary:

\begin{cor} Let $(\mathfrak{g}^{\mathbb{C}}, g^{\mathbb{C}})$ be a holomorphic inner product space, where $\mathfrak{g}^{\mathbb{C}}$ is a complex Lie algebra. Let $\mathfrak{g}\subset \mathfrak{g}^{\mathbb{C}}$ be a real form which is a real slice. Then any two Cartan involutions of $\mathfrak{g}$ are conjugate in $Aut(\mathfrak{g})_0\cap O(p,q)_0$.  \end{cor}

Note that the corollary is a generalised version (for general pseudo-inner product spaces) of $\acute{E}$. Cartan's result: (\cite{Helgason}, Thm 7.2). Let us give an example (of a non-compact real form) where there is a unique Cartan involution of the Lie algebra:

\begin{ex} If we consider again the real Heisenberg group: $(H_3(\mathbb{R}),-g)$ and follow Example \ref{heis} with Cartan involution $\theta$, then calculating the derivation algebra of $\mathfrak{h}_3(\mathbb{R})$ w.r.t to the basis $\{e_j\}$, then the matrices have the form: $\begin{bmatrix} a & c & 0 \\ e & b & 0 \\ f & l & a+b \\ \end{bmatrix}$, for $a,b,c,e,l,f\in \mathbb{R}$. Thus $Dim\Big{(}\mathfrak{der}(\mathfrak{h}_3(\mathbb{R}))\Big{)}=6$. Now if such a derivation $D$ belongs to $\mathfrak{o}(1,2)$, then an easy calculation shows that $D=\begin{bmatrix} 0 & c & 0 \\ -c & 0 & 0 \\ 0 & 0 & 0 \\ \end{bmatrix}$, i.e the Lie algebra of $Aut(\mathfrak{h}_3(\mathbb{R}))\cap O(1,2)$ has dimension 1. Now a global Cartan involution $\Theta_1$ of $O(1,2)$ is given by $$f\mapsto \theta f\theta, \ \ O(1,2)=Ke^{\mathfrak{p}},$$ and clearly since $\theta\in Aut(\mathfrak{h}_3(\mathbb{R}))$, then it leaves $H:=Aut(\mathfrak{h}_3(\mathbb{R}))\cap O(1,2)$ invariant. But since the Lie algebra $\mathfrak{h}$ of $H$ is fixed by the corresponding local Cartan involution of $\mathfrak{o}(1,2)$, then it follows that $\Theta_1$  fixes pointwise all elements of $H$. Indeed this follows since $H$ is algebraic (see for example \cite{Neeb}), so every $h\in H$ can be written as $h=ke^x$ for $k\in K\cap H, \ x\in \mathfrak{h}\cap \mathfrak{p}$. Thus we conclude that: $$[\theta, f]=0, \forall f\in H,$$ in other words by the previous theorem, there exist a unique Cartan involution of $\mathfrak{h}_3(\mathbb{R})$, namely $\theta$.  \end{ex}

Recall that for a real semi-simple Lie algebra $\mathfrak{g}$ equipped with the Killing form: $-\kappa$. Then it is proved in Helgason (\cite{Helgason}) that given any involution $\tilde{\theta}$ of $\mathfrak{g}$ there exist a Cartan involution of $\mathfrak{g}$ commuting with $\tilde{\theta}$. We can also prove a generalised version of this result for a general pseudo inner product space: $(\mathfrak{g}, g)$, by mimicking the proof given for semi-simple Lie algebras in \cite{Helgason} together with Corollary \ref{compatible}.     

\begin{cor} \label{commuteC} Let $(\mathfrak{g}^{\mathbb{C}}, g^{\mathbb{C}})$ be a holomorphic inner product space, where $\mathfrak{g}^{\mathbb{C}}$ is a complex Lie algebra. Let $\mathfrak{g}\subset \mathfrak{g}^{\mathbb{C}}$ be a real form which is a real slice. Suppose there exist a compact real form of $\mathfrak{g}^{\mathbb{C}}$ which is also a real Lie subalgebra. Let $\tilde{\theta}\in Aut(g)\cap O(p,q)$ be an involution of $\mathfrak{g}$. Then there exist a Cartan involution $\theta$ of $\mathfrak{g}$ that commutes with $\tilde{\theta}$, i.e $[\tilde{\theta}, \theta]=0$. \end{cor}                           
\begin{proof} Let $\theta'$ be a Cartan involution of $\mathfrak{g}$ by Corollary \ref{compatible}. By mimicking the proof of (\cite{Helgason}, Thm 7.1) in view of Exercise (4, Ch.3, \cite{Helgason}), we apply the proof given there to the inner product $g_{\theta'}(-,-):=g(-,\theta'(-))$, together with the symmetric operator: $N:=\tilde{\theta}\theta'$. Thus there exist a $\psi\in O(p,q)\cap Aut(\mathfrak{g})$ such that $[\psi\theta'\psi^{-1},\tilde{\theta}]=0$, therefore let $\theta:=\psi\theta'\psi^{-1}$. \end{proof}                                  

Suppose now that $\Big{(}\mathfrak{g}^{\mathbb{C}}, g^{\mathbb{C}}\Big{)}$ is a holomorphic inner product space on a complex Lie algebra $\mathfrak{g}^{\mathbb{C}}$. Let $O(n,\mathbb{C})$ be the isometry group. If $\phi\in O(n,\mathbb{C})\cap Aut(\mathfrak{g}^{\mathbb{C}})$ is an involution such that when $\mathfrak{g}^{\mathbb{C}}=V_+\oplus V_-,$ is the eigenspace decomposition then $\mathfrak{g}:=V_+\oplus iV_-$ is a real slice (i.e $g^{\mathbb{C}}(\mathfrak{g},\mathfrak{g})\in \mathbb{R}$), then we shall write $\phi\in\mathcal{O}$ for such a map. We can put an equivalence relation on maps $\mathcal{O}$ by conjugacy in $O(n,\mathbb{C})\cap Aut(\mathfrak{g}^{\mathbb{C}})$. 

The following theorem should be compared with a similar result of semi-simple Lie algebras equipped with their Killing form (see for example \cite{OVi}, Thm 1.3):

\begin{thm} If $\Big{(}\mathfrak{g}^{\mathbb{C}}, g^{\mathbb{C}}\Big{)}$ have a compact real form, then there is a bijection between isomorphism classes of real forms $\mathfrak{g}\subset \Big{(}\mathfrak{g}^{\mathbb{C}}, g^{\mathbb{C}}\Big{)}$ and conjugacy classes of $\mathcal{O}$. \end{thm}
\begin{proof} Let $\mathfrak{g}\subset \Big{(}\mathfrak{g}^{\mathbb{C}}, g^{\mathbb{C}}\Big{)}$ be a real form, then we can choose a Cartan involution of $\mathfrak{g}$ say $\theta$ by (Corollary \ref{compatible}). Define the map $[\mathfrak{g}]\mapsto [\theta^{\mathbb{C}}].$ The map is well-defined (Theorem \ref{conjugate}) since any two Cartan involutions are conjugate in $O(p,q)\cap Aut(\mathfrak{g})$, where $(p,q)$ is the signature of the induced pseudo-inner product from $g^{\mathbb{C}}$. To see that the map is surjective, let $\phi\in \mathcal{O}$, and set $\mathfrak{g}:=V_+\oplus iV_-$ for the real form $\mathfrak{g}\subset \Big{(}\mathfrak{g}^{\mathbb{C}}, g^{\mathbb{C}}\Big{)}$. Then $\phi$ restricted to $\mathfrak{g}$ is an involution, and if $\sigma$ denotes its conjugation map then $[\sigma,\phi]=0$. But we may choose a Cartan involution $\theta$ of $\mathfrak{g}$ such that $[\theta,\phi]=0$ by (Corollary \ref{commuteC}). Thus $\theta^{\mathbb{C}}=\sigma\tau$ for $\tau$ a conjugation map of a compact real form $\mathfrak{u}\subset (\mathfrak{g}^{\mathbb{C}}, g^{\mathbb{C}})$. Thus $$\sigma\tau\phi=\phi\sigma\tau,$$ therefore $$\sigma\tau\phi=\sigma\phi\tau,$$ or in other words by canceling $\sigma$ we obtain $[\tau,\phi]=0$ so that $\phi$ is in fact a Cartan involution of $\mathfrak{g}$, and hence $[\mathfrak{g}]\mapsto [\phi]$. 

Suppose now that $\mathfrak{g}_j$ are two real forms for $j=1,2$ such that the images are the same: $[\theta_1^{\mathbb{C}}]=[\theta_2^{\mathbb{C}}]$. Then if $\sigma_j$ denotes the conjugation maps, and $\mathfrak{u}_j$ are the compact real forms compatible with $\mathfrak{g}_j$, then $\theta_j^{\mathbb{C}}=\sigma_j\tau_j$. But since the maps are conjugate in $O(n,\mathbb{C})\cap Aut(\mathfrak{g}^{\mathbb{C}})$, say by $\phi$, thus $\phi\theta_1^{\mathbb{C}}\phi^{-1}=\theta_2^{\mathbb{C}}$ then it is easy to see that: $\phi(\mathfrak{u}_1)=\mathfrak{u}_2$. Thus $$\theta_2^{\mathbb{C}}=\phi\theta_1^{\mathbb{C}}\phi^{-1}=\phi\sigma_1\tau_1\phi^{-1}=\phi\sigma_1\phi^{-1}\phi\tau_1\phi^{-1}=\phi\sigma_1\phi^{-1}\tau_2,$$ thus cancelling $\tau_2$ we obtain: $\phi\sigma_1\phi^{-1}=\sigma_2$, which proves that $[\mathfrak{g}_1]=[\mathfrak{g}_2]$, and hence the map is injective. The theorem is proved. \end{proof}

\section{Wick-rotating a Lorentzian signature}

If we assume our left-invariant metric on our Lie group $G$ is Lorentzian or of signature $(+,-,\cdots,-)$, then being able to Wick-rotate to a Riemannian space puts some constraints on the structure of the Lie algebra (in view of Corollary \ref{Riem}). Now since a Wick-rotation is a local condition, it would be interesting to know what type of Lie algebra allows for a Wick-rotation to a Riemannian Lie group. 
\\

We recall by the fundamental \textsl{Levi-Malcev theorem} (see for example \cite{OVi}) that our Lie algebra $\mathfrak{g}$ can be written as a semi-direct sum $\mathfrak{g}=\mathfrak{s}\ltimes \mathfrak{h}$, where $\mathfrak{h}$ is the radical of $\mathfrak{g}$ and $\mathfrak{s}\subset \mathfrak{g}$ is either trivial or a semi-simple subalgebra of $\mathfrak{g}$ called the Levi-factor. 

It is clear that $\mathfrak{g}^{\mathbb{C}}=\mathfrak{s}^{\mathbb{C}}\ltimes\mathfrak{h}^{\mathbb{C}}$, and if $\tilde{\mathfrak{g}}$ is another real form of $\mathfrak{g}^{\mathbb{C}}$, then writing a Levi-decomposition: $\tilde{\mathfrak{g}}=\tilde{\mathfrak{s}}\ltimes \tilde{\mathfrak{h}}$, then $\tilde{\mathfrak{h}}$ is a real form of $\mathfrak{h}^{\mathbb{C}}$. To see that $\tilde{\mathfrak{s}}$ is a real form of $\mathfrak{s}^{\mathbb{C}}$, we note that there exist a $k\geq 1$ such that $$\mathfrak{s}^{\mathbb{C}}=[{\mathfrak{g}^{\mathbb{C}}}^{(k)},{\mathfrak{g}^{\mathbb{C}}}^{(k)}]\supset [\tilde{\mathfrak{g}}^{(k)},\tilde{\mathfrak{g}}^{(k)}]=\tilde{\mathfrak{s}}.$$

In view of the existence of an involution of Lorentzian decomposition we can say the following:

\begin{prop} Let $(G,g)\subset (G^{\mathbb{C}},g^{\mathbb{C}})$ be a real slice of Lie groups. Then the following statements hold:
\begin{enumerate}
\item{} Suppose $g(-,-)$ has Lorentzian signature. If $(G,g)$ can be Wick-rotated to a Riemannian Lie group $(\tilde{G},\tilde{g})$ then $\mathfrak{s}=0$ or $\mathfrak{h}\neq 0$. Moreover if $\tilde{\mathfrak{s}}$ is a Levi-factor of $\tilde{\mathfrak{g}}$, then $\tilde{\mathfrak{s}}\cong \mathfrak{s}$.
\item{} Suppose $g(-,-)$ has signature $(+,-,\cdots,-)$. If $(G,g)$ can be Wick-rotated to a Riemannian Lie group, then either $\mathfrak{s}=0$ or $\mathfrak{s}\cong \mathfrak{sl}_2(\mathbb{R})$.
\end{enumerate}
 \end{prop}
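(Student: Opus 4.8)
The two cases share a common starting point, so I would set that up first. Since $(G,g)$ is pseudo-Riemannian and Wick-rotates to a Riemannian Lie group, Corollary \ref{Riem} supplies a Cartan involution $\theta\in\mathrm{Aut}(\mathfrak{g})$ of the metric, and I write $\mathfrak{g}=\mathfrak{t}\oplus\mathfrak{p}$ for the associated Cartan decomposition. Because $g(\cdot,\theta\cdot)$ is positive definite, $g$ is positive definite on $\mathfrak{t}$ and negative definite on the $g$-orthogonal complement $\mathfrak{p}$, so $\dim\mathfrak{t}=p$ and $\dim\mathfrak{p}=q$. Thus the Lorentzian hypothesis of (1) forces $\dim\mathfrak{p}=1$, while the signature $(+,-,\dots,-)$ of (2) forces $\dim\mathfrak{t}=1$. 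As $\theta$ is an involutive automorphism I will use throughout the relations $[\mathfrak{t},\mathfrak{t}]\subseteq\mathfrak{t}$, $[\mathfrak{t},\mathfrak{p}]\subseteq\mathfrak{p}$, $[\mathfrak{p},\mathfrak{p}]\subseteq\mathfrak{t}$, and that the radical $\mathfrak{h}$, being characteristic, is $\theta$-stable; by Mostow's theorem I may then fix a $\theta$-stable Levi factor $\mathfrak{s}=\mathfrak{s}_{\mathfrak{t}}\oplus\mathfrak{s}_{\mathfrak{p}}$ with $\mathfrak{s}_{\mathfrak{t}}=\mathfrak{s}\cap\mathfrak{t}$ and $\mathfrak{s}_{\mathfrak{p}}=\mathfrak{s}\cap\mathfrak{p}$. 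Via Corollary \ref{slice} the Riemannian target may be taken to be $\tilde{\mathfrak{g}}=\mathfrak{t}\oplus i\mathfrak{p}$, and by Proposition \ref{local} any other Riemannian Wick-rotation has an isomorphic Lie algebra, so I lose no generality in this choice.

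For part (1), the key observation is that $\dim\mathfrak{p}=1$ gives $[\mathfrak{p},\mathfrak{p}]=0$, whence $[\mathfrak{g},\mathfrak{p}]=[\mathfrak{t},\mathfrak{p}]\subseteq\mathfrak{p}$ and $\mathfrak{p}$ is a one-dimensional abelian ideal; in particular $\mathfrak{p}\subseteq\mathfrak{h}$. If $\mathfrak{h}=0$ then $\mathfrak{p}=0$, contradicting $\dim\mathfrak{p}=1$, so $\mathfrak{g}$ cannot be semisimple and nonzero, which is exactly the dichotomy $\mathfrak{s}=0$ or $\mathfrak{h}\neq0$. For the additional clause, since $\mathfrak{p}\subseteq\mathfrak{h}$ the $\theta$-stable Levi factor satisfies $\mathfrak{s}_{\mathfrak{p}}\subseteq\mathfrak{s}\cap\mathfrak{h}=0$, so $\mathfrak{s}\subseteq\mathfrak{t}$. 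Hence the very same subalgebra $\mathfrak{s}$ lies in $\tilde{\mathfrak{g}}=\mathfrak{t}\oplus i\mathfrak{p}$ and is a Levi factor there, complementing the radical $\tilde{\mathfrak{h}}=(\mathfrak{h}\cap\mathfrak{t})\oplus i\mathfrak{p}$. Since all Levi factors of $\tilde{\mathfrak{g}}$ are conjugate, every $\tilde{\mathfrak{s}}$ is isomorphic to this $\mathfrak{s}$, giving $\tilde{\mathfrak{s}}\cong\mathfrak{s}$.

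For part (2), I argue from the $\theta$-stable Levi factor with $\dim\mathfrak{t}=1$, so $\dim\mathfrak{s}_{\mathfrak{t}}\in\{0,1\}$. If $\mathfrak{s}_{\mathfrak{t}}=0$ then $\mathfrak{s}=\mathfrak{s}_{\mathfrak{p}}\subseteq\mathfrak{p}$ is perfect, so $\mathfrak{s}=[\mathfrak{s},\mathfrak{s}]\subseteq[\mathfrak{p},\mathfrak{p}]\subseteq\mathfrak{t}$, forcing $\mathfrak{s}\subseteq\mathfrak{t}\cap\mathfrak{p}=0$; this is the case $\mathfrak{s}=0$. Otherwise $\dim\mathfrak{s}_{\mathfrak{t}}=1$, so $\mathfrak{s}_{\mathfrak{t}}=\mathfrak{t}$ and $\theta|_{\mathfrak{s}}$ is an involutive automorphism of the semisimple algebra $\mathfrak{s}$ with one-dimensional fixed subalgebra. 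I would first show $\mathfrak{s}$ is simple: $\theta$ cannot interchange two simple ideals, for the graph of such a swap is a fixed subalgebra of dimension at least $3$, and each $\theta$-stable simple ideal contributes a nonzero fixed part (an involution of a nonabelian simple algebra is never $-\mathrm{id}$); as the total fixed dimension is $1$, exactly one simple summand occurs. Finally, a simple real Lie algebra admitting an involution with one-dimensional fixed subalgebra must be three-dimensional — the isotropy algebra $\mathfrak{s}_{\mathfrak{t}}\cong\mathbb{R}$ together with $[\mathfrak{s}_{\mathfrak{p}},\mathfrak{s}_{\mathfrak{p}}]=\mathfrak{s}_{\mathfrak{t}}$ forces $\dim\mathfrak{s}_{\mathfrak{p}}=2$ by the classification of irreducible symmetric pairs — hence $\mathfrak{s}$ is a real form of $\mathfrak{sl}_2(\mathbb{C})$, i.e. $\mathfrak{s}\cong\mathfrak{sl}_2(\mathbb{R})$.

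The routine parts are the bracket bookkeeping and the reduction to a single simple summand. The main obstacle I anticipate is the final step of (2): the dimension count leaves both $\mathfrak{sl}_2(\mathbb{R})$ and $\mathfrak{su}(2)$ as three-dimensional simple algebras carrying an involution with one-dimensional fixed subalgebra, and the indefinite metric alone does not separate them — indeed $\mathfrak{su}(2)$ with a suitable signature $(1,2)$ metric Wick-rotates to a Riemannian $\mathfrak{sl}_2(\mathbb{R})$, so $\theta|_{\mathfrak{s}}$ need not be a Cartan involution of $\mathfrak{s}$. Pinning the conclusion to $\mathfrak{sl}_2(\mathbb{R})$ specifically therefore requires an extra input forcing $\tilde{\mathfrak{s}}=\mathfrak{s}_{\mathfrak{t}}\oplus i\mathfrak{s}_{\mathfrak{p}}$ to be the compact real form, and it is precisely here that I would scrutinize the hypotheses; absent such an input the sharp statement is that $\mathfrak{s}$ is a real form of $\mathfrak{sl}_2(\mathbb{C})$.
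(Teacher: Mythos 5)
Your argument has the same skeleton as the paper's proof: Corollary \ref{Riem} supplies a Cartan involution $\theta\in Aut(\mathfrak{g})$ of the metric, the Levi factor is split along the Cartan decomposition, and the dimension count ($\dim\mathfrak{p}=1$ in case (1), $\dim\mathfrak{t}=1$ in case (2)) drives the conclusions. Two of your moves, however, genuinely improve on the paper. Where you invoke Mostow--Taft to choose a $\theta$-stable Levi factor, the paper instead claims $\theta(\mathfrak{s})\subseteq\mathfrak{s}$ for the given Levi factor on the grounds that $[\mathfrak{g}^{(k)},\mathfrak{g}^{(k)}]=\mathfrak{s}$ for some $k$; this identity fails whenever $\mathfrak{g}$ is perfect but not semisimple (e.g.\ $\mathfrak{g}=\mathfrak{sl}_2(\mathbb{R})\ltimes\mathbb{R}^2$, whose derived series is constantly $\mathfrak{g}$), so your route is the correct repair, and your handling of the ``moreover'' clause in (1) --- exhibiting $\mathfrak{s}$ itself as a Levi factor of $\tilde{\mathfrak{g}}=\mathfrak{t}\oplus i\mathfrak{p}$, then reducing the general Riemannian target to this one via Proposition \ref{local} --- avoids the paper's reliance on the same faulty derived-series identity to show $\tilde{\mathfrak{s}}$ is a real form of $\mathfrak{s}^{\mathbb{C}}$. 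Part (1) of your proof and the reduction in (2) to a single three-dimensional simple summand are sound (at the same level of appeal to standard facts as the paper itself).

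Your closing reservation about (2) is not a defect in your write-up but a correct diagnosis of a gap in the paper's own proof. The paper asserts that $\theta^{\mathbb{C}}$ is a Cartan involution, in the classical sense, of $\tilde{\mathfrak{s}}:=\mathfrak{s}_{\mathfrak{t}}\oplus i\mathfrak{s}_{\mathfrak{p}}$, deduces that $-\kappa$ there has signature $(+,-,\dots,-)$, concludes $\tilde{\mathfrak{s}}\cong\mathfrak{sl}_2(\mathbb{R})$, hence $\mathfrak{s}^{\mathbb{C}}\cong\mathfrak{sl}_2(\mathbb{C})$, and then writes ``hence also $\mathfrak{s}\cong\mathfrak{sl}_2(\mathbb{R})$''. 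Two steps there are unjustified. First, an involutive automorphism with one-dimensional fixed algebra need not restrict to a classical Cartan involution: take $\mathfrak{s}=\mathfrak{sl}_2(\mathbb{R})$ with its genuine Cartan involution and a metric positive on $\mathfrak{so}(2)$ and negative on the symmetric part; then $\tilde{\mathfrak{s}}\cong\mathfrak{su}(2)$ and $-\kappa$ on it is positive definite, contradicting the claimed signature. Second, the final inference from $\mathfrak{s}^{\mathbb{C}}\cong\mathfrak{sl}_2(\mathbb{C})$ silently discards the other real form, and your example shows it cannot be discarded: on $\mathfrak{su}(2)$ take $\theta=\mathrm{Ad}\big(\mathrm{diag}(i,-i)\big)$, with a metric positive on the fixed line and negative on the $(-1)$-eigenspace; this is a Cartan involution of Lie algebras of a signature-$(+,-,-)$ metric, so by Corollary \ref{slice} the group Wick-rotates to a Riemannian Lie group (with algebra $\mathfrak{su}(1,1)$), yet $\mathfrak{s}=\mathfrak{su}(2)\not\cong\mathfrak{sl}_2(\mathbb{R})$. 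So statement (2) as printed is too strong; what is provable --- and what your argument proves --- is exactly your ``sharp statement'': $\mathfrak{s}=0$ or $\mathfrak{s}$ is a real form of $\mathfrak{sl}_2(\mathbb{C})$, i.e.\ $\mathfrak{s}\cong\mathfrak{sl}_2(\mathbb{R})$ or $\mathfrak{s}\cong\mathfrak{su}(2)$.
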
 
\begin{proof} For case (1), assume $\mathfrak{g}=\mathfrak{s}\ltimes \mathfrak{h}$ for $\mathfrak{s}\neq 0$, and choose a Cartan involution $\theta$ of $\mathfrak{g}$. Write $\mathfrak{g}=\mathfrak{k}\oplus \mathfrak{p}$ for the Cartan decomposition. Then $\theta$ leaves $\mathfrak{s}$ invariant: $\theta(\mathfrak{s})\subset \mathfrak{s}$. Indeed note that since $\mathfrak{h}$ is solvable, then there exist $k\geq 1$ such that the $k^{th}$-derived algebra satisfies: $[\mathfrak{g}^{(k)},\mathfrak{g}^{(k)}]=\mathfrak{s},$ thus it follows that $\theta$ must leave $\mathfrak{s}$ invariant, and hence we can write, $$\mathfrak{s}=(\mathfrak{s}\cap \mathfrak{k})\oplus (\mathfrak{s}\cap \mathfrak{p}).$$ We claim that $\mathfrak{s}\cap \mathfrak{p}=0$, indeed suppose not, i.e $\mathfrak{p}\subset \mathfrak{s}$ thus $[\mathfrak{s},\mathfrak{p}]\subset \mathfrak{p}$ so $\mathfrak{p}$ is an abelian non-trivial ideal of $\mathfrak{s}$, contradicting the semi-simplicity of $\mathfrak{s}$. Thus $\theta$ fixes $\mathfrak{s}$ point wise. Moreover $\mathfrak{p}\lhd \mathfrak{g}$ is an abelian ideal, and so therefore $\mathfrak{p}\subset \mathfrak{h}$, i.e $\mathfrak{h}\neq 0$. Finally since $g(-,-)$ restricted to $\mathfrak{s}$ and $\tilde{g}(-,-)$ restricted to $\tilde{\mathfrak{s}}$ is positive definite, then $\mathfrak{s}$ and $\tilde{\mathfrak{s}}$ give rise to a Wick-rotation of two Riemannian Lie groups, thus by Proposition \ref{local} it follows that $\mathfrak{s}\cong \tilde{\mathfrak{s}}$, and case (1) is proved. For case (2) suppose $\mathfrak{g}$ is non-solvable (i.e $\mathfrak{s}\neq 0$), then again w.r.t $\theta$ we see that $$\mathfrak{s}=(\mathfrak{s}\cap \mathfrak{k})\oplus (\mathfrak{s}\cap \mathfrak{p}),$$ where $\mathfrak{s}\cap \mathfrak{k}\neq 0$, since if not then $\mathfrak{s}\subset \mathfrak{p}$, i.e $\mathfrak{s}=[\mathfrak{s},\mathfrak{s}]\subset \mathfrak{k}$, which is a contradiction. Now since $\theta^{\mathbb{C}}$ is a Cartan involution of a real form $\tilde{\mathfrak{g}}\subset\mathfrak{s}^{\mathbb{C}}$, then $-\kappa$ on $\tilde{\mathfrak{g}}$ must also have the signature $(+,-,\cdots,-)$, this follows since $\mathfrak{k}^{\mathbb{C}}$ is 1-dimensional. Now finally if $\tilde{\mathfrak{g}}=\tilde{\mathfrak{k}}\oplus \tilde{\mathfrak{p}}$ is a Cartan decomposition, then $\tilde{\mathfrak{k}}$ is abelian and 1-dimensional, thus it follows that $\tilde{\mathfrak{g}}\cong \mathfrak{sl}_2(\mathbb{R})$ see for example (Prop. 13.1.10, \cite{Neeb}). We conclude that $\mathfrak{s}^{\mathbb{C}}\cong \mathfrak{sl}_2(\mathbb{C})$, and hence also $\mathfrak{s}\cong \mathfrak{sl}_2(\mathbb{R})$. The proposition is proved.     \end{proof}

Thus restricting to the class of semi-simple Lie algebras it is impossible to Wick-rotate a Lorentzian metric to a Riemannian metric. However even for a nilpotent Lie algebra the converse of (1) is not necessarily true, indeed consider the nilpotent Lie algebra $\mathfrak{h}_3(\mathbb{R})$ of $3\times 3$ strictly upper triangular matrices. Then if $\theta$ is an involution with $Dim(\mathfrak{p})=1$, we must be able to find a basis $\{x_1,x_2,x_3\}$ such that $$[x_1,x_2]=C^1_{12}x_1+C^2_{12}x_2, \ \ [x_1,x_3]=C^3_{13}x_3, \ \ [x_2,x_3]=C^3_{23}x_3.$$ But since $[\mathfrak{h}_3(\mathbb{R}),\mathfrak{h}_3(\mathbb{R})]$ has dimension 1, then it follows that $C^1_{12}=0=C^2_{12}$. Moreover since $\mathfrak{h}_3(\mathbb{R})$ is nilpotent of class 2, then we conclude also that $C^3_{12}=0=C^3_{23}$, i.e $\mathfrak{h}_3(\mathbb{R})$ would have to be abelian, thus it is not possible to Wick-rotate a Lorentzian metric (i.e of signature $(+,+,-)$) on $H_3(\mathbb{R})$ to a Riemannian metric. However Example \ref{heis}, shows that $H_3(\mathbb{R})$ may possess a metric of signature $(-,-,+)$ where this is possible.

In view of case (2), there are examples of metrics (of signature $(+,-,\cdots,-)$) that are Wick-rotatable to a Riemannian metric within: $\mathfrak{g}=\mathfrak{sl}_2(\mathbb{R})$ or $\mathfrak{g}=\mathfrak{h}_3(\mathbb{R})$ and even $\mathfrak{g}=\mathfrak{sl}_2(\mathbb{R})\oplus \mathfrak{h}_3(\mathbb{R})$.

If we impose the condition that the metric is bi-invariant, i.e $(\mathfrak{g},g)$ is a quadratic Lie algebra, then we have the following equivalence result:

\begin{cor} Let $(G,g)\subset (G^{\mathbb{C}},g^{\mathbb{C}})$ be a real slice of Lie groups. Suppose $g$ is bi-invariant. Then the following statements hold:
\begin{enumerate}
\item{} Suppose $g(-,-)$ has Lorentzian signature. Then $(G,g)$ can be Wick-rotated to a Riemannian Lie group $(\tilde{G},\tilde{g})$ if and only if $\mathfrak{g}$ is abelian or $\mathfrak{g}$ is a direct sum of $\mathfrak{h}\neq 0$ and $\mathfrak{s}$ is compact semi-simple. Moreover $\tilde{\mathfrak{g}}\cong \mathfrak{g}$.
\item{} Suppose $g(-,-)$ has signature $(+,-,\cdots,-)$. Then $(G,g)$ can be Wick-rotated to a Riemannian Lie group, if and only if either $\mathfrak{g}$ is abelian or $\mathfrak{g}$ is a direct sum of $\mathfrak{s}\cong \mathfrak{sl}_2(\mathbb{R})$ and $\mathfrak{h}$ abelian.
\end{enumerate}
 \end{cor}
\begin{proof} Case (1). Since $g$ is bi-invariant then $\tilde{\mathfrak{g}}$ must be reductive, because $\tilde{g}$ is bi-invariant and a Riemannian metric. Thus the complexification is also reductive, i.e so are the real forms, thus $\mathfrak{g}$ is reductive. This means that either $\mathfrak{s}=0$ or $\mathfrak{s}$ is semi-simple, and $\mathfrak{h}$ is abelian. So if $\mathfrak{g}$ is non-abelian then $\mathfrak{s}$ is semi-simple. Now by the proof of the previous proposition case (1), then given a Cartan involution of $\mathfrak{g}$ we must have that $\theta$ fixes point wise $\mathfrak{s}$. This means that $g$ restricted to $\mathfrak{s}$ is an inner product. If $\mathfrak{s}^{\mathbb{C}}$ is simple, then $g_{|_{\mathfrak{s}}}$ must be proportional to the Killing form: $\lambda\kappa$ ($\lambda\in \mathbb{R}$). Now since $g$ is positive definite on $\mathfrak{s}$ then $\lambda>0$ i.e $\mathfrak{s}$ is compact. If $\mathfrak{s}^{\mathbb{C}}$ is not simple then on each simple ideal, $g^{\mathbb{C}}$ is proportional to the Killing form. There are two cases to consider, either $\mathfrak{s}$ is simple (in which case $\mathfrak{s}$ has a complex structure) or each simple ideal $\mathfrak{J}$ of $\mathfrak{s}$ has a simple complexification $\mathfrak{J}^{\mathbb{C}}\lhd\mathfrak{s}^{\mathbb{C}}$ or has a complex structure. See for instance (Thm 6.94, \cite{Knapp}). Assume $\mathfrak{s}$ is simple, then $\mathfrak{s}^{\mathbb{C}}\cong \mathfrak{s}\oplus \mathfrak{s}$, where $\mathfrak{s}$ is a complex Lie algebra. Thus $g^{\mathbb{C}}$ restricted to $\mathfrak{s}$ is proportional to the complex Killing form on $\mathfrak{s}$, say $\lambda\kappa$. Thus viewing $\mathfrak{s}$ as a real Lie algebra, we get that $g^{\mathbb{C}}$ restricts to something proportional to the real part: $\lambda \frac{1}{2} Re(\kappa)=g$, which is positive definite by assumption. Therefore $\lambda\in\mathbb{R}$. But the real Killing form of $\mathfrak{s}$ is precisely $2Re(\kappa)$, so we conclude that either the Killing form is positive definite or negative definite, this is impossible. The argument for the other case is a combination of the previous two arguments. We conclude that $\mathfrak{s}$ is semi-simple compact. Now finally it follows that $\mathfrak{g}\cong \tilde{\mathfrak{g}}$ by the previous proposition and that $\mathfrak{h}\cong \tilde{\mathfrak{h}}$ (since they are abelian of the same dimension). 

Conversely if $\mathfrak{g}$ is abelian then the statement is trivial, therefore assume $\mathfrak{s}$ is compact semi-simple. Then $\mathfrak{s}:=[\mathfrak{g},\mathfrak{g}]$ and $\mathfrak{h}=\mathfrak{z}(\mathfrak{g})$ forms an orthogonal sum w.r.t $g$. Thus $g$ restricted to $\mathfrak{s}$ must be positive definite, indeed restricting $g$ on a compact simple ideal (which is a non-degenerate ideal) $\mathfrak{I}\lhd\mathfrak{s}$ we get something proportional to the Killing form on $\mathfrak{J}$: $\lambda\kappa$. Thus if $\lambda>0$ then this would contradict $g$ having Lorentzian signature. Therefore $\lambda<0$. Hence $g$ on $\mathfrak{h}$ must have Lorentzian signature, and so we can easily find a Cartan involution $\theta_{\mathfrak{h}}$ of $\mathfrak{h}$ such that $1_{\mathfrak{s}}\oplus \theta_{\mathfrak{h}}$ is a Cartan involution of $\mathfrak{g}$, now use Corollary \ref{Riem}. 

Case (2). Again since $\mathfrak{g}$ must be reductive, then by the previous proposition case (2), if $\mathfrak{g}$ is not abelian then $\mathfrak{s}\cong \mathfrak{sl}_2(\mathbb{R})$ and $\mathfrak{h}$ is abelian. Conversely let $\mathfrak{g}=\mathfrak{s}\oplus \mathfrak{h}$. If $\mathfrak{s}=0$, then the statements is obviously true. Suppose therefore that $\mathfrak{s}\cong \mathfrak{sl}_2(\mathbb{R})$. Note that $\mathfrak{s}=[\mathfrak{g},\mathfrak{g}]$ and $\mathfrak{h}=\mathfrak{z}(\mathfrak{g})$ is an orthogonal direct sum w.r.t $g$, i.e $[\mathfrak{g},\mathfrak{g}]^{\perp}=\mathfrak{z}(\mathfrak{g})$. Thus $g$ restricted to $\mathfrak{s}\cong\mathfrak{sl}_2(\mathbb{R})$ forms a quadratic Lie algebra, but since $\mathfrak{sl}_2(\mathbb{C})$ is simple, then $g$ must be proportional to the Killing form: $\lambda\kappa$ ($\lambda\in\mathbb{R}$). Note that $\lambda<0$ since other wise $g$ would not be able to have signature: $(+,-,-\dots, -)$. Also note that $g$ restricted to $\mathfrak{h}$ must be of signature: $(-,-,\dots,-)$. Thus choose any Cartan involution $\theta_{\mathfrak{s}}$ of $\mathfrak{s}$, and the Cartan involution $\theta_{\mathfrak{h}}$ of $\mathfrak{h}$ of the form: $\theta_{\mathfrak{h}}(x):=-x$. Then $\theta_{\mathfrak{s}}\oplus \theta_{\mathfrak{h}}$ is a Cartan involution of $\mathfrak{g}$, and the statement follows by Corollary \ref{Riem}.    \end{proof}

Thus a solvable Lie group $(G,g)$ with a bi-invariant (non-Riemannian) metric is not Wick-rotatable to a Riemannian Lie group.

\section{A remark on Wick-rotatable tensors of Lie groups}
Let $(G,g)\subset (G^{\mathbb{C}}, g^{\mathbb{C}})\supset (\tilde{G},\tilde{g})$ be a Wick-rotation of Lie groups. Assume $(\tilde{G}, \tilde{g})$ is Riemannian. By Corollary \ref{Riem} there exist a Cartan involution $\theta$ of $\mathfrak{g}$. Recall the section on Wick-rotatable tensors. We prove in this section that if $\tilde{v}=v\in \mathcal{V}\cap \tilde{\mathcal{V}}$ are two tensors  on the Lie algebras, then they are Wick-rotatable with respect to an embedding $\phi^{-1}\in H^{\mathbb{C}}$ into the same $H^{\mathbb{C}}$-orbit for $$H^{\mathbb{C}}:=Aut(\mathfrak{g}^{\mathbb{C}})\cap O(n,\mathbb{C})\subset O(n,\mathbb{C}),$$ such that $(\mathfrak{g},\phi^{-1}(\tilde{\mathfrak{g}}))$ is a compatible pair (i.e also a compatible triple). Denote $H:=Aut(\mathfrak{g})\cap O(p,q)$. Note that $H\subset H^{\mathbb{C}}$ is a real form. Indeed the real structure of $O(n,\mathbb{C})$ fixing $O(p,q)$ given by $A\mapsto\sigma A\sigma$ where $\sigma$ is the conjugation map w.r.t $\mathfrak{g}$, leaves $H^{\mathbb{C}}$ invariant, and thus fixes $H$. Note also that a global Cartan involution $\Theta$: $A\mapsto \theta A\theta$ of $O(p,q)$ where $\theta$ is a Cartan involution of $\mathfrak{g}$, also leave $H$ invariant. Thus $\Theta$ is a global Cartan involution of $H$. The arguments above are analogous for the real from: $\tilde{H}:=O(\tilde{p},\tilde{q})\cap Aut(\tilde{\mathfrak{g}})$. 

\begin{lem} \label{aut} Let $(G,g)\subset (G^{\mathbb{C}}, g^{\mathbb{C}})\supset (\tilde{G},\tilde{g})$ be a Wick-rotation of Lie groups. Assume $(\tilde{G}, \tilde{g})$ is Riemannian. Then any two tensors $v=\tilde{v}\in \mathcal{V}\cap\tilde{\mathcal{V}}$ can be embedded into the same $H^{\mathbb{C}}$-orbit for some $\phi^{-1}\in H^{\mathbb{C}}$ such that $\Big{(}\mathfrak{g},\phi^{-1}(\tilde{\mathfrak{g}})\Big{)}$ is a compatible pair.  \end{lem}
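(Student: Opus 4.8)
The plan is to reduce everything to the compact real slice determined by the Cartan involution that Corollary \ref{Riem} provides, and then to transport $\tilde{\mathfrak{g}}$ onto that slice by an element of $H^{\mathbb{C}}$, re-running the mechanism of Proposition \ref{local}. Since $v=\tilde v$ and we want $\phi^{-1}\in H^{\mathbb{C}}$, the orbit statement will be essentially free once the geometry is in place; the real content is producing the right $\phi^{-1}$ together with the compatibility of the resulting pair.

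First I would invoke Corollary \ref{Riem} to fix a Cartan involution $\theta$ of $g(-,-)$ which is a homomorphism of Lie algebras, write $\mathfrak{g}=\mathfrak{t}\oplus\mathfrak{p}$ for its Cartan decomposition, and set $W:=\mathfrak{t}\oplus i\mathfrak{p}$. By the proof of Corollary \ref{slice}, $W$ is a Lie subalgebra and a real form of $\mathfrak{g}^{\mathbb{C}}$ on which $g^{\mathbb{C}}(-,-)$ restricts to an inner product, so $W$ is a compact real slice Wick-rotatable from $(G,g)$. Because $W$ is the standard Wick-rotation of $\mathfrak{g}$ with $\theta$ as common Cartan involution, the conjugations $\sigma$ of $\mathfrak{g}$ and $\tau$ of $W$ are simultaneously diagonal on $\mathfrak{t}\oplus\mathfrak{p}\oplus i\mathfrak{t}\oplus i\mathfrak{p}$ and satisfy $\sigma\tau=\theta^{\mathbb{C}}$; in particular they commute, so $(\mathfrak{g},W)$ is a compatible pair (equivalently a compatible triple with compact member $W$). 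Hence it suffices to produce $\phi^{-1}\in H^{\mathbb{C}}$ with $\phi^{-1}(\tilde{\mathfrak{g}})=W$.

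To construct $\phi$, note that $W$ and $\tilde{\mathfrak{g}}$ are both Riemannian real forms of $\mathfrak{g}^{\mathbb{C}}$ that are Wick-rotatable from $(G,g)$. I would re-run the argument of Proposition \ref{local} with these two forms in the roles of $\mathfrak{g}$ and $\tilde{\mathfrak{g}}$: choose $g_0\in O(n,\mathbb{C})$ with $g_0(W)=\tilde{\mathfrak{g}}$, so that the two Lie brackets, both restrictions of the complex bracket $[-,-]^{\mathbb{C}}$, lie in the same $O(n,\mathbb{C})$-orbit. Since the isometry group of the Riemannian metric on $W$ is compact, the closed-orbit / minimal-vector theory of \cite{4} and \cite{3} lets me correct $g_0$ by some isometry $h$ of $W$ so that $g_0h^{-1}$ fixes $[-,-]^{\mathbb{C}}$. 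Then $g_0h^{-1}\in Aut(\mathfrak{g}^{\mathbb{C}})\cap O(n,\mathbb{C})=H^{\mathbb{C}}$ and $g_0h^{-1}(W)=\tilde{\mathfrak{g}}$; setting $\phi:=g_0h^{-1}$ gives $\phi^{-1}(\tilde{\mathfrak{g}})=W$, so by the previous paragraph $(\mathfrak{g},\phi^{-1}(\tilde{\mathfrak{g}}))$ is the desired compatible pair. The orbit claim is then immediate: since $v=\tilde v$ and $\phi^{-1}\in H^{\mathbb{C}}$, we have $\phi^{-1}(\tilde v)=\phi^{-1}(v)\in H^{\mathbb{C}}\cdot v$, so $v$ and $\phi^{-1}(\tilde v)$ lie in the same $H^{\mathbb{C}}$-orbit.

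I expect the main obstacle to be the middle step, namely guaranteeing that the intertwining map between the two compact forms $W$ and $\tilde{\mathfrak{g}}$ can be chosen simultaneously as a Lie algebra automorphism and as an isometry, that is inside $H^{\mathbb{C}}$ rather than merely in $O(n,\mathbb{C})$. This is exactly where the compactness of the Riemannian isometry group and the GIT results of \cite{4} and \cite{3} are essential, since they convert a complex-orbit coincidence into a single $O(n)$-adjustment fixing the bracket. By contrast, the compatibility of $(\mathfrak{g},W)$ is a formal consequence of the Cartan decomposition $\mathfrak{g}=\mathfrak{t}\oplus\mathfrak{p}$ and of $\theta$ being a homomorphism, and should be routine.
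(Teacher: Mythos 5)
Your proof is correct and takes essentially the same route as the paper: fix the Cartan involution from Corollary \ref{Riem}, observe that the compact form $\mathfrak{u}=\mathfrak{t}\oplus i\mathfrak{p}$ is compatible with $\mathfrak{g}$, and obtain $\phi\in Aut(\mathfrak{g}^{\mathbb{C}})\cap O(n,\mathbb{C})$ carrying $\mathfrak{u}$ to $\tilde{\mathfrak{g}}$ via the mechanism of Proposition \ref{local}, the only difference being that you re-run that proposition's minimal-vector argument explicitly where the paper simply cites it. Since $v=\tilde{v}$, the $H^{\mathbb{C}}$-orbit claim then follows exactly as in the paper's proof.
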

\begin{proof} By Corollary \ref{Riem}, we can choose a Cartan involution $\theta$ of $\mathfrak{g}$. Moreover by Proposition \ref{local} there is an isomorphism of Lie algebras: $\phi\in O(n,\mathbb{C})\cap Aut(\mathfrak{g}^{\mathbb{C}})$ sending $\mathfrak{u}\mapsto \tilde{\mathfrak{g}}$, where $\mathfrak{u}:=\mathfrak{k}\oplus i\mathfrak{p}$ w.r.t the Cartan decomposition of $\theta$. Thus $\phi^{-1}(\tilde{\mathfrak{g}})=\mathfrak{u}$ and $\mathfrak{g}$ are compatible. Let now $\tilde{v}=v\in \mathcal{V}\cap \tilde{\mathcal{V}}$ be two Wick-rotatable tensors, using the isometry tensor action of $\phi^{-1}$ on $v$, then $\phi^{-1}\cdot v$ and $v$ lie in the same $H^{\mathbb{C}}$-orbit as required. 
   \end{proof}

\begin{thm} \label{purely} Let $(G,g)\subset (G^{\mathbb{C}}, g^{\mathbb{C}})\supset (\tilde{G},\tilde{g})$ be a Wick-rotation of Lie groups. Assume $(\tilde{G}, \tilde{g})$ is Riemannian. Let $\tilde{v}=v\in \mathcal{V}\cap\tilde{\mathcal{V}}$ be two tensors (i.e they are also Wick-rotatable). There exist a Cartan involution $\theta$ of $\mathfrak{g}$ such that $\theta\cdot v=v$. \end{thm}
\begin{proof} Consider the real forms: $H\subset H^{\mathbb{C}}\supset \tilde{H}$ as above. Then one simply note that $H^{\mathbb{C}}$ is naturally algebraic, and moreover is a linearly complex reductive Lie group, simply because $\tilde{H}$ is a compact real form. Now since $O(n,\mathbb{C})$ and $Aut(\mathfrak{g}^{\mathbb{C}})$ are naturally algebraic groups defined over $\mathbb{R}$, then so is $H^{\mathbb{C}}$. Moreover $H$ and $\tilde{H}$ are the real points of $H^{\mathbb{C}}$ (respectively).  Thus the groups are naturally among the class of groups considered in Section \ref{compatiblereps}. The theorem follows by Lemma \ref{aut} and Theorem \ref{54}. \end{proof}

Thus we can restate a stronger version of Theorem \ref{a} for Lie groups, (see also paragraf after Definition \ref{RPM} for the tensors in question): 

\begin{thm}\label{b} Let $(G,g)\subset (G^{\mathbb{C}}, g^{\mathbb{C}})\supset (\tilde{G},\tilde{g})$ be a Wick-rotation of Lie groups. Assume $(\tilde{G}, \tilde{g})$ is a Riemannian Lie group. Then the following statements hold:
\begin{enumerate}
\item{} There exist a Cartan involution $\theta$ of $\mathfrak{g}$ such that 
 $\nabla_{\theta(x)}\theta(y)=\theta(\nabla_xy)$ for all $x,y\in \mathfrak{g}$.
\item{} There exist a Cartan involution $\theta$ of $\mathfrak{g}$ such that 
 $ric_g\Big{(}\theta(x),\theta(y)\Big{)}=ric_g(x,y)$ for all $x,y\in \mathfrak{g}$.
\item{} There exist a Cartan involution $\theta$ of $\mathfrak{g}$ such that 
$[\theta, Ric_g]=0$.
\item{} There exist a Cartan involution $\theta$ of $\mathfrak{g}$ such that 
$R\Big{(}\theta(x),\theta(y)\Big{)}\Big{(}\theta(z)\Big{)}=\theta\Big{(}R(x,y)(z)\Big{)}$ for all $x,y,z\in \mathfrak{g}$.
\end{enumerate}
 \end{thm} 

Any of the properties 1-4 of the previous theorem, are (using also Theorem \ref{t}) Wick-rotatable. Thus we state as a stronger result for Lie groups (compare with \cite{W1}):

\begin{cor} Let $(G,g)$ be a pseudo-Riemannian Lie group. Then the property of being Riemann purely electric (RPE) at $1$ w.r.t to a Cartan involution $\theta$ of $\mathfrak{g}$ is Wick-rotatable. \end{cor}
\begin{proof} Follows by Theorem \ref{t} and Theorem \ref{b}. \end{proof}

We end this section by also noting the following result:

\begin{cor} Let $(G,g)\subset (G^{\mathbb{C}}, g^{\mathbb{C}})\supset (\tilde{G},\tilde{g})$ be a Wick-rotation of Lie groups. Assume $(\tilde{G}, \tilde{g})$ is Riemannian. Then $$\Big{(}\forall x\in \mathfrak{g}\cap \tilde{\mathfrak{g}}\Big{)}\Big{(}\exists \theta\in Aut(\mathfrak{g})\Big{)}\Big{(}\theta(x)=x\Big{)},$$ where $\theta$ is a Cartan involution of $\mathfrak{g}$. \end{cor}
\begin{proof} Consider the isometry action of $O(n,\mathbb{C})$ (restricted to $H^{\mathbb{C}}$ defined above) on the complex Lie algebra: $\mathfrak{g}^{\mathbb{C}}$, i.e $$g\cdot x:=g(x), \ g\in O(n,\mathbb{C}), \ x\in \mathfrak{g}^{\mathbb{C}}.$$ Let $x=\tilde{x}\in \mathfrak{g}\cap\tilde{\mathfrak{g}}$. Then $x=\tilde{x}$ are two Wick-rotatable tensors, thus w.r.t a choice of $g\in H^{\mathbb{C}}$ we can assume that the real actions (of $H$ and $\tilde{H}$) are compatible. Moreover $x$ and $\tilde{x}$ lie in the same complex orbit, such that $O(p,q)x\sim O(n)\tilde{x}$ are compatible real orbits. We can now finish the proof by applying Theorem \ref{purely}. \end{proof}

\section{Wick-rotating an algebraic soliton}

A pseudo-Riemannian Lie group $(G,g)$, such that the Ricci operator $Ric_g\in \mathfrak{gl}(\mathfrak{g})$ has the form: $$Ric_g=\lambda\cdot 1_{\mathfrak{g}}+D,$$ where $\lambda\in \mathbb{R}$ and $D\in \mathfrak{der}(\mathfrak{g})$ (a derivation) is called an \textit{algebraic soliton} (defined in \cite{Asol}). If $D$ can be taken to be $D=0$, then $(G,g)$ is said to be Einstein, and moreover if the Lie algebra is also nilpotent (resp. solvable), then an algebraic soliton $(G,g)$, is said to be a \textit{Ricci nilsoliton} (resp. \textit{solsoliton}). For a discussion of Riemannian Ricci nilsolitons we refer to for example \cite{Lauret}. However we shall only be interested in Wick-rotating such a geometry.

We shall prove a result regarding the existence of a Wick-rotation of an algebraic soliton to a Riemannian Lie group, by using the results of the previous section.

\begin{lem} \label{Ric} The property of being an algebraic soliton is Wick-rotatable. \end{lem}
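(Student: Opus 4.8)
The plan is to run the same complexify--restrict mechanism that underlies Theorem \ref{a}, but now applied to the Ricci \emph{operator} together with its decomposition into a scalar and a derivation. Fix the holomorphic left-invariant metric $g^{\mathbb{C}}$ on $\mathfrak{g}^{\mathbb{C}}$ and let $\mathrm{Ric}_{g^{\mathbb{C}}}\in\mathfrak{gl}(\mathfrak{g}^{\mathbb{C}})$ be its Ricci operator at the identity. As recorded in the discussion preceding Theorem \ref{a}, the complex Levi-Civita connection, hence the complex Riemann and Ricci tensors, are given by the Koszul formula as expressions polynomial in the structure constants and the metric coefficients. Since both $\mathfrak{g}$ and $\tilde{\mathfrak{g}}$ are real slices on which the bracket and $g^{\mathbb{C}}$ are real, the operator $\mathrm{Ric}_{g^{\mathbb{C}}}$ has a real matrix in a real basis of either slice; therefore it preserves both $\mathfrak{g}$ and $\tilde{\mathfrak{g}}$ and restricts to $\mathrm{Ric}_g$ and $\mathrm{Ric}_{\tilde{g}}$ respectively.

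First I would complexify the soliton equation. If $(G,g)$ is an algebraic soliton, write $\mathrm{Ric}_g=\lambda\cdot 1_{\mathfrak{g}}+D$ with $\lambda\in\mathbb{R}$ and $D\in\mathfrak{der}(\mathfrak{g})$, and extend $\mathbb{C}$-linearly to get $\mathrm{Ric}_{g^{\mathbb{C}}}=\lambda\cdot 1_{\mathfrak{g}^{\mathbb{C}}}+D^{\mathbb{C}}$, where $D^{\mathbb{C}}\in\mathfrak{der}(\mathfrak{g}^{\mathbb{C}})$ since the complexification of a derivation is again a derivation. Next I would restrict this identity to $\tilde{\mathfrak{g}}$: both $\mathrm{Ric}_{g^{\mathbb{C}}}$ and $\lambda\cdot 1$ preserve $\tilde{\mathfrak{g}}$, hence so does their difference $D^{\mathbb{C}}$. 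A derivation of $\mathfrak{g}^{\mathbb{C}}$ leaving the real subalgebra $\tilde{\mathfrak{g}}$ invariant restricts to an $\mathbb{R}$-linear derivation $\tilde{D}:=D^{\mathbb{C}}|_{\tilde{\mathfrak{g}}}\in\mathfrak{der}(\tilde{\mathfrak{g}})$, and restricting the whole identity yields $\mathrm{Ric}_{\tilde{g}}=\lambda\cdot 1_{\tilde{\mathfrak{g}}}+\tilde{D}$. Thus $(\tilde{G},\tilde{g})$ is again an algebraic soliton, with the \emph{same} constant $\lambda$. As the two real slices play symmetric roles inside $(G^{\mathbb{C}},g^{\mathbb{C}})$, the converse is identical, so the soliton property is an invariant.

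The one point requiring care is the reality claim in the first paragraph: one must verify not only that $\mathrm{Ric}_{g^{\mathbb{C}}}$ restricts to the genuine real Ricci operator on each slice simultaneously, but, more delicately, that the \emph{difference} $\mathrm{Ric}_{g^{\mathbb{C}}}-\lambda\cdot 1$ remains real on $\tilde{\mathfrak{g}}$, so that its restriction is a bona fide derivation and not merely a $\mathbb{C}$-linear endomorphism. This is exactly the reality of the curvature data on each real slice that already makes $\mathrm{Ric}_g$ a Wick-rotatable tensor in Theorem \ref{a}; once it is in hand, the scalar part transfers verbatim and the derivation part transfers by restriction. In particular no GIT input is needed here beyond what guarantees that the slices sit inside $(\mathfrak{g}^{\mathbb{C}},g^{\mathbb{C}})$ with the metric restricting, in contrast to Theorem \ref{t}, where extracting a Cartan \emph{involution} forced the compatible-orbit machinery of \cite{3}.
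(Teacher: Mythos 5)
Your proof is correct and takes essentially the same route as the paper's: complexify the soliton equation to $Ric_{g^{\mathbb{C}}}=\lambda\cdot 1_{\mathfrak{g}^{\mathbb{C}}}+D^{\mathbb{C}}$, then restrict to $\tilde{\mathfrak{g}}$ and observe that $D^{\mathbb{C}}=Ric_{g^{\mathbb{C}}}-\lambda\cdot 1$ must leave $\tilde{\mathfrak{g}}$ invariant, hence restricts to a derivation $\tilde{D}\in\mathfrak{der}(\tilde{\mathfrak{g}})$ with the same constant $\lambda$. Your extra paragraph spelling out why $Ric_{g^{\mathbb{C}}}$ is real on each slice is a fair elaboration of the restriction step the paper states without comment, and you correctly note that no GIT input is needed here.
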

\begin{proof} Let $(G,g)$ be Wick-rotatable to $(\tilde{G}, \tilde{g})$. Suppose $(G,g)$ is an algebraic soliton. The Ricci operator: $\mathfrak{g}\xrightarrow{Ric_g} \mathfrak{g}$ on $G$ is also a restriction of the Ricci operator on $G^{\mathbb{C}}$. So if $Ric_g=\lambda\cdot 1_{\mathfrak{g}}+D$ for some $\lambda\in \mathbb{R}$ and $D\in \mathfrak{der}(\mathfrak{g})$, then also $$Ric_{g^{\mathbb{C}}}=(\lambda\cdot 1_{\mathfrak{g}})^{\mathbb{C}}+D^{\mathbb{C}}=\lambda 1_{\mathfrak{g}^{\mathbb{C}}}+D^{\mathbb{C}}.$$ Note that $D^{\mathbb{C}}$ is a derivation of $\mathfrak{g}^{\mathbb{C}}$, thus when restricting to $Ric_{\tilde{g}}$ we see that $D^{\mathbb{C}}$ must leave $\tilde{\mathfrak{g}}$ invariant and is thus a derivation $\tilde{D}$ of $\tilde{\mathfrak{g}}$ as required. The lemma follows. \end{proof}

Note from the lemma that $D\in End(\mathfrak{g})$ and $\tilde{D}\in End(\tilde{\mathfrak{g}})$ are Wick-rotatable tensors, under the isometry action: $$g\cdot f:=gfg^{-1}, \ g\in O(n,\mathbb{C}), \ f\in End(\mathfrak{g}^{\mathbb{C}}).$$

\begin{cor} The property of being a Ricci nilsoliton (resp. solsoliton) is Wick-rotatable.   \end{cor}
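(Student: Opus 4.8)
The plan is to combine the already-established Lemma \ref{Ric} (that being an algebraic soliton is Wick-rotatable) with the earlier-proved fact (the list following Corollary~1) that nilpotency and solvability are themselves Wick-rotatable properties. Recall that a Ricci nilsoliton is by definition an algebraic soliton whose underlying Lie algebra is nilpotent, and a Ricci solsoliton is an algebraic soliton whose Lie algebra is solvable. So the corollary factors cleanly into two independent invariances that I would simply intersect.

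First I would set up the Wick-rotation $(G,g)$ to $(\tilde G,\tilde g)$ and assume $(G,g)$ is a Ricci nilsoliton. By Lemma \ref{Ric}, $(\tilde G,\tilde g)$ is again an algebraic soliton, so its Ricci operator has the form $Ric_{\tilde g}=\lambda\cdot 1_{\tilde{\mathfrak g}}+\tilde D$ with $\tilde D\in\mathfrak{der}(\tilde{\mathfrak g})$; here the $\lambda$ is the \emph{same} real scalar, since the complexified Ricci operator $Ric_{g^{\mathbb C}}=\lambda 1_{\mathfrak g^{\mathbb C}}+D^{\mathbb C}$ restricts to both real forms and the derivation parts $D^{\mathbb C}$, $\tilde D^{\mathbb C}$ agree by the remark following the lemma. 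It therefore remains only to transfer the nilpotency of $\mathfrak g$ to $\tilde{\mathfrak g}$. For this I invoke that nilpotency is Wick-rotatable: since $\mathfrak g$ and $\tilde{\mathfrak g}$ are real forms of the same complex Lie algebra $\mathfrak g^{\mathbb C}$, and $\mathfrak g$ nilpotent forces $\mathfrak g^{\mathbb C}$ nilpotent (the lower central series complexifies), which in turn forces the real form $\tilde{\mathfrak g}$ to be nilpotent. Hence $(\tilde G,\tilde g)$ is an algebraic soliton on a nilpotent Lie group, i.e.\ a Ricci nilsoliton. The converse direction is identical by symmetry of the Wick-rotation relation.

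For the solsoliton case the argument is word-for-word the same, replacing ``nilpotent'' with ``solvable'' and invoking the Wick-rotatability of solvability: $\mathfrak g$ solvable implies $\mathfrak g^{\mathbb C}$ solvable (the derived series commutes with complexification), and any real form of a solvable complex Lie algebra is solvable, so $\tilde{\mathfrak g}$ is solvable as well.

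I do not anticipate a genuine obstacle here, as the statement is essentially a conjunction of two previously-granted invariances; the only point deserving a sentence of care is that the defining scalar $\lambda$ and the derivation really do descend compatibly to $\tilde{\mathfrak g}$, which is exactly what the proof of Lemma \ref{Ric} supplies. If anything is to be watched, it is that one should not claim the solitons are isometric or even locally isometric—only that the structural property (algebraic soliton plus (nil)potency) is preserved—so the proof must stay at the level of invariance of these algebraic conditions rather than attempting any identification of the two groups.
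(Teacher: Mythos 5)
Your proof is correct and is essentially the argument the paper intends: the corollary is stated without proof precisely because it follows immediately from Lemma \ref{Ric} together with the Wick-rotatability of nilpotency and solvability (listed after Corollary~1), which holds since the lower central and derived series commute with complexification and both real forms sit inside the same $\mathfrak{g}^{\mathbb{C}}$. Your added care about the scalar $\lambda$ and the derivation descending via $D^{\mathbb{C}}=\tilde{D}^{\mathbb{C}}$ matches the content of Lemma \ref{Ric} and its following remark exactly.
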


\begin{cor} The property of being Einstein is Wick-rotatable. \end{cor}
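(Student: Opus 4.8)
The plan is to deduce this directly from Lemma \ref{Ric} by specialising to the case in which the soliton derivation vanishes. Recall that $(G,g)$ is Einstein precisely when its algebraic soliton data is $Ric_g=\lambda\cdot 1_{\mathfrak{g}}$ with $D=0$. Thus the statement reduces to showing that if the soliton derivation on $(G,g)$ is zero, then the induced soliton derivation $\tilde{D}$ on the Wick-rotated Lie group $(\tilde{G},\tilde{g})$ is also zero, and that the Einstein constant is preserved.

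First I would invoke the mechanism already used in the proof of Lemma \ref{Ric}: the Ricci operator $Ric_g$ on $G$ is the restriction of the complex Ricci operator $Ric_{g^{\mathbb{C}}}$ on $G^{\mathbb{C}}$. Hence from $Ric_g=\lambda\cdot 1_{\mathfrak{g}}$ one obtains, upon complexifying, $Ric_{g^{\mathbb{C}}}=\lambda\cdot 1_{\mathfrak{g}^{\mathbb{C}}}$, where $\lambda\in\mathbb{R}$ is unchanged since it is a real scalar and $D^{\mathbb{C}}=0$. Next I would restrict $Ric_{g^{\mathbb{C}}}$ to the real form $\tilde{\mathfrak{g}}\subset\mathfrak{g}^{\mathbb{C}}$. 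Because $\lambda\cdot 1_{\mathfrak{g}^{\mathbb{C}}}$ restricts to $\lambda\cdot 1_{\tilde{\mathfrak{g}}}$ on any real subspace, and because $Ric_{\tilde{g}}$ is exactly this restriction, we conclude $Ric_{\tilde{g}}=\lambda\cdot 1_{\tilde{\mathfrak{g}}}$. In the notation of Lemma \ref{Ric} this reads $\tilde{D}=D^{\mathbb{C}}\big{|}_{\tilde{\mathfrak{g}}}=0$, so $(\tilde{G},\tilde{g})$ is again Einstein with the same constant $\lambda$.

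There is essentially no obstacle here, as the content is a direct specialisation of the already-established Lemma \ref{Ric}; the only point meriting care is that the Einstein constant $\lambda$ remains real and unchanged under complexification and restriction, which holds because $\lambda\cdot 1$ is a real scalar operator and the restriction of the identity to a real subspace is again the identity. Finally, since the Wick-rotation relation is symmetric in $(G,g)$ and $(\tilde{G},\tilde{g})$, the converse direction follows identically, and hence being Einstein is a Wick-rotatable property.
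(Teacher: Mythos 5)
Your proof is correct and matches the paper's intended argument: the corollary is stated there without proof as an immediate consequence of Lemma \ref{Ric}, and your specialisation to $D=0$ --- restricting $Ric_{g^{\mathbb{C}}}=\lambda\cdot 1_{\mathfrak{g}^{\mathbb{C}}}$ to $\tilde{\mathfrak{g}}$ to get $Ric_{\tilde{g}}=\lambda\cdot 1_{\tilde{\mathfrak{g}}}$ --- is exactly the mechanism of that lemma's proof.
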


Applying the previous section, we get the following necessary condition for when an algebraic soliton can be Wick-rotated to a Riemannian algebraic soliton:

\begin{thm} \label{der}Suppose $(G,g)$ is an algebraic soliton, with $Ric_g=\lambda\cdot 1_{\mathfrak{g}}+D$, which can be Wick-rotated to a Riemannian algebraic soliton: $(\tilde{G},\tilde{g})$ with $Ric_{\tilde{g}}=\lambda\cdot 1_{\tilde{g}}+\tilde{D}$. Then there exist a Cartan involution $\theta$ of $\mathfrak{g}$ such that $[\theta, D]=0$. \end{thm}
\begin{proof} The derivations: $D^{\mathbb{C}}=\tilde{D}^{\mathbb{C}}\in \mathcal{V}\cap \tilde{\mathcal{V}}$ are Wick-rotatable (see the proof of Lemma \ref{Ric}). Thus w.r.t a choice of map $g\in H^{\mathbb{C}}$ we can assume w.l.o.g that $D$ and $\tilde{D}$ lie in the same complex orbit under the conjugation action: $H^{\mathbb{C}}\cdot D\ni\tilde{D}.$ By Theorem \ref{purely} there is a Cartan involution $\theta$ of $\mathfrak{g}$ such that $\theta\cdot D:=\theta D\theta=D$ or in other words $[\theta, D]=0$. The theorem is proved. \end{proof}

\begin{ex} We follow Example \ref{heis}. Thus consider the real 3-dimensional Heisenberg group $(H_3(\mathbb{R}), -g)$, then this is a Ricci nilsoliton of signature $(+,-,-)$. Indeed one can calculate with respect to the basis $\{e_1,e_2,e_3\}$, that the Ricci operator can be written uniquely as: $$Ric_{-g}=-\frac{3}{2}\cdot I_3+D,$$ where $D(e_1)=e_1, \ D(e_2)=e_2, \ D(e_3)=2e_3.$ Note that the Cartan involution $\theta$ commutes with $D$, i.e $[\theta, D]=0$. Thus when restricting to the Wick-rotated Riemannian Ricci nilsoliton $(\tilde{G}, \tilde{g})$ with Lie algebra: $\tilde{\mathfrak{g}}:=\langle ie_1, ie_2, e_3\rangle\subset \mathfrak{h}_3(\mathbb{C})$, we get the corresponding Ricci operator expressed as: $$Ric_{\tilde{g}}=-\frac{3}{2}\cdot I_3+\tilde{D},$$ with $\tilde{D}(ie_1)=ie_1, \ \tilde{D}(ie_2)=ie_2, \ \tilde{D}(e_3)=2e_3$. \end{ex}

\end{document}